\newcommand{\ud}{\,\mathrm{d}}
\begin{document}
\title{New asymptotically sharp Korn and Korn-like inequalities in thin domains}
\author{Davit Harutyunyan\\
\textit{Department of Mathematics, The University of Utah}}

\maketitle
\begin{abstract}
It is well known that Korn inequality plays a central role in the theory of linear elasticity. In the present work
we prove new asymptotically sharp Korn and Korn-like inequalities in thin curved domains with a non-constant thickness.
This new results will be useful when studying the buckling of compressed shells, in particular when calculating the critical
buckling load.\\
\newline
\textbf{Keywords}\ \ Korn inequality, elasticity, elliptic operators, thin domains, shells
\newline
\linebreak
\textbf{Mathematics Subject Classification}\ \ 00A69, 35J65, 74B05, 74B20, 74K25
\end{abstract}

\tableofcontents

\section{Introduction and the main results}

\newtheorem{Theorem}{Theorem}[section]
\newtheorem{Lemma}[Theorem]{Lemma}
\newtheorem{Corollary}[Theorem]{Corollary}
\newtheorem{Remark}[Theorem]{Remark}
\newtheorem{Definition}{Definition}[section]

 Korn and Korn-like Inequalities are essential for proving the existence of a solution to the main boundary value problems of elasticity and for estimating
the solutions, see [\ref{bib.Friedrichs},\ref{bib.Horgan},\ref{bib.Kondratiev.Oleinik2},\ref{bib.Korn1},\ref{bib.Korn2}]. Assume $\Omega\in\mathbb R^n$ is a bounded and connected domain with a Lipschitz boundary. For any
displacement $U=(u_1,u_2,\dots,u_n)$ denote by $e(U)$ the symmetric part of the gradient, i.e., $e(U)=\frac{1}{2}(\nabla U+\nabla U^t),$ which will be the strain in the elasticity context. Then the second Korn inequality asserts the following: \textit{There exists a constant $C(\Omega)$ depending only on $\Omega$ such that}
\begin{equation}
\label{Korn.ineq.2}
\|\nabla U\|_{L^2(\Omega)}^2\leq C(\Omega)(\|e(U)\|_{L^2(\Omega)}^2+\|U\|_{L^2(\Omega)}^2)\quad\textit{for all}\quad U\in H^1(\Omega).
\end{equation}
Denote by $skew(\mathbb R^n)$ the space of rigid displacements of $\mathbb R^n,$ i.e., the set of all vector fields $U=(u_1,u_2,\dots,u_n)$ such that $U(x)=a+Ax,$ where $a\in\mathbb R^n$ and $A$ is an $n\times n$ skew-symmetric matrix. Assume furthermore, that $V$ is a closed subspace of $H^1(\mathbb R^n,\mathbb R^n)$ that has no intersection with $skew(\mathbb R^n),$ except the identically zero transformation. Then the first Korn inequality asserts the following:
\textit{There exists a constant $C(\Omega,V)$ depending only on $\Omega$ and the subspace $V$ such that}
\begin{equation}
\label{Korn.ineq.1}
\|\nabla U\|_{L^2(\Omega)}^2\leq C(\Omega,V)\|e(U)\|_{L^2(\Omega)}^2,\quad \textit{for all} \quad U\in V.
\end{equation}
Another version of first Korn inequality, called geometric rigidity also holds and asserts the following:
\textit{There exists a constant $C(\Omega)$ depending only on $\Omega$ such that for any displacement  $U\in H^1(\Omega)$ there exists an associated skew-symmetric matrix $A_U\in skew(\mathbb R^n)$ such that}

\begin{equation}
\label{Korn.ineq.1.geomertic}
\|\nabla U-A_U\|_{L^2(\Omega)}^2\leq C(\Omega)\|e(U)\|_{L^2(\Omega)}^2.
\end{equation}

In particular, geometric rigidity estimate (\ref{Korn.ineq.1.geomertic}) implies that if a displacement $U\in H^1(\Omega)$ has a skew-symmetric gradient a.e. in $\Omega,$ then it must be a constant affine transformation with a skew-symmetric gradient.
There have been several proofs of first and second Korn inequalities since Korn's work in 1909, see [\ref{bib.Korn2}]. In [\ref{bib.Kondratiev.Oleinik1},\ref{bib.Kondratiev.Oleinik2}] Kondratiev and Oleinik gave very elegant proofs to different kind of Korn and Korn-like inequalities. However, there have been only a few results that give
the exact asymptotics of the constant $C$ in the first or second Korn inequalities for different domains with different vector spaces $V,$ see [\ref{bib.Grab.Har1},\ref{bib.Grab.Har2},\ref{bib.Lewiz.Muller},\ref{bib.Lewicka.Muller1}] for some recent results in thin domains, that concern the exact asymptotics of the constant $C$ in terms of the thickness of the domain. It has been understood in [\ref{bib.Grab.Har2}] that such kind of Korn inequalities play a crucial role in
establishing the critical buckling load in shell buckling problems, in particular a rigorous asymptotic analysis of the buckling of perfect cylindrical shells under axial compression is done in [\ref{bib.Grab.Har2}].
Another issue is the so called "sensitivity to imperfections" in the mentioned problem, which corresponds to the case of an imperfect load or imperfect cylindrical shell. In the case of shape imperfections one needs new sharp Korn inequalities for the cross sections of the cylinder i.e., Korn inequalities for two dimensional domains, to prove an asymptotically sharp Korn inequality for the shell, and therefore to attempt to extend the results in [\ref{bib.Grab.Har2}] to the generalized cylindrical shells. In the present work we prove asymptotically sharp Korn and Korn type inequalities in thin curved domains with a nonconstant thickness in all dimensions and also in dimension two with zero and periodic Dirichlet type boundary conditions in the thickness direction. The new inequalities in $2D$ can be used to prove a Korn inequality for three dimensional shells with a nonconstant thickness, in particular for cylindrical shells, spherical shells, etc. We emphasize, that the inequalities in the present work generalize some of the ones obtained in [\ref{bib.Grab.Har1}] that are the cornerstones of the analysis in [\ref{bib.Grab.Har1},\ref{bib.Grab.Har2}] and also that they are not straightforward extensions of the ones in [\ref{bib.Grab.Har1}] as the proofs are based on totally different ideas. Lemma~\ref{lemma1.ellipcit.korn} extends a similar lemma in [\ref{bib.Grab.Har1}] to any elliptic operator with constant coefficients instead of the Laplace as well as $\mathbb R^2$ to $\mathbb R^n.$ The operator extension is essential, because when making a linear change of variables like $y_1=\sum_{i=1}^na_ix_i,$ $y_i=x_i,$ when $i\geq 2,$ to rotate the domain, the Laplace operator becomes an elliptic one with constant coefficients. For an operator $L(u)=\sum_{i,j=1}^na_{ij}\frac{\partial^2u}{\partial x_i \partial x_j}$ with constant coefficients we will consider the elipticity condition
\begin{equation}
\label{ellipticity}
\sum_{i,j=1}^na_{ij}x_ix_j\geq\lambda |x|^2\quad \text{for all} \quad x\in \mathbb R^n,
\end{equation}
where $\lambda>0,$ and we also assume that
\begin{equation}
\label{bounded.coeff}
\sum_{i=1}^n|a_{ij}|\leq \Lambda\quad\text{for all}\quad 1\leq j\leq n.
\end{equation}
We call in this paper an inequality like the one given in Theorem~\ref{theorem1.second.korn.dim2} a \textit{strong second Korn inequality}, as it implies the usual second Korn inequality. We will as well call an inequality like the one given in Theorem~\ref{theorem2.ellipcit.korn} a \textit{Korn-like inequality}, because it actually derives a two dimensional strong second Korn inequality, as will be seen later.
The following two theorems are \textit{Korn-like inequalities} in all dimensions, where for the vector $x=(x_1,x_2,\dots,x_n)$ we set $x'=(x_2,x_3,\dots,x_n).$

\begin{Theorem}
\label{theorem2.ellipcit.korn}
Let $\omega\subset\mathbb R^{n-1}$ be a bounded Lipschitz domain, let $x_1=\varphi(x')\colon\omega\to\mathbb R$ be a positive Lipschitz function with $H=\sup_{x'\in\omega}\varphi(x')$ and $h=\inf_{x'\in\omega}\varphi(x')>0.$ Denote $\Omega=\{x\in\mathbb R^n \ : \ x'\in\omega, \ 0<x_1<\varphi(x')\}$ and assume that the operator $L(u)=\sum_{i,j=1}^na_{ij}\frac{\partial^2u}{\partial x_i \partial x_j}$ with constant coefficients satisfies conditions (\ref{ellipticity}) and (\ref{bounded.coeff}). Then there exists a constant $C$ depending on $n,$ $\Lambda,$ $\lambda,$ $L=\mathrm{Lip}(\varphi)$ and the ratio $m=H/h$ such that any $u\in C^3(\bar\Omega)$ solution of $L(u)=0$ satisfying the boundary conditions $u(x)=0$ on the portion $\Gamma=\{x\in\partial\Omega \ : \ x'\in\partial\omega\}$ of the boundary of $\Omega$
fulfills the inequality
$$\|\nabla u\|_{L^2(\Omega)}^2\leq \frac{C(n,\lambda,\Lambda,L,m)}{h}\|u\|_{L^2(\Omega)}\|u_{x_1}\|_{L^2(\Omega)}+C(n,\lambda,\Lambda, L,m)\|u_{x_1}\|_{L^2(\Omega)}^2.$$
\end{Theorem}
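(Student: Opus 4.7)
Multiplying $L(u)=0$ by $u$ and integrating by parts over $\Omega$ yields the energy identity
$$\int_\Omega \sum_{i,j} a_{ij} u_{x_i} u_{x_j}\,dx = \int_{\partial\Omega} u\,(A\nabla u)\cdot\nu\,dS,$$
with $A=(a_{ij})$ symmetrized (permissible since $u$ is $C^3$). Ellipticity (\ref{ellipticity}) bounds the left side below by $\lambda\|\nabla u\|_{L^2}^2$, and since $u\equiv 0$ on the lateral part $\Gamma$, only the top face $\Gamma_{top}=\{x_1=\varphi(x')\}$ and the bottom face $\Gamma_{bot}=\{x_1=0\}$ contribute on the right. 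Expanding the surface measure produces trace integrals of the form $\int_\omega (\cdots)\,u u_{x_j}\big|_{x_1=\tau(x')}\,dx'$ for indices $j\ge 2$, and here the identity $u u_{x_j}=\tfrac12(u^2)_{x_j}$ combined with the tangential divergence theorem on $\omega$ (using that $u(\tau(x'),x')$ vanishes for $x'\in\partial\omega$, inherited from the Dirichlet condition on $\Gamma$) annihilates the $\Gamma_{bot}$ contributions entirely and converts the $\Gamma_{top}$ contributions into expressions involving only $u u_{x_1}\varphi_{x_j}\big|_{x_1=\varphi}$. After this reduction the identity collapses to
$$\lambda\|\nabla u\|_{L^2}^2 \le \int_\omega c_{top}(x')\,u u_{x_1}\big|_{x_1=\varphi}\,dx' \;-\; a_{11}\int_\omega u u_{x_1}\big|_{x_1=0}\,dx',$$
with $c_{top}(x')$ a bounded function of $\nabla\varphi$ and the $a_{ij}$.

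To turn each trace integral into a volume quantity with the desired $1/h$-scaling, I apply a Pohozaev-type argument. For the bottom face, multiply $L(u)=0$ by $(\varphi(x')-x_1)\,u$ and integrate by parts; the weight vanishes on $\Gamma_{top}$ and $u$ vanishes on $\Gamma$, so the only surviving boundary term is a constant multiple of $\int_\omega \varphi\,u u_{x_1}\big|_{x_1=0}\,dx'$. The remainder of the identity expresses this trace as $\int_\Omega u u_{x_1}\,dx - \int_\Omega(\varphi-x_1)u_{x_1}^2\,dx - \int_\Omega(\varphi-x_1)\,u u_{x_1 x_1}\,dx$, plus lower-order error terms proportional to $\nabla\varphi$. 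Cauchy--Schwarz bounds the first two of these volume integrals by $\|u\|\,\|u_{x_1}\|$ and $H\|u_{x_1}\|^2$; after dividing out the weight $\varphi\ge h$, these give the desired $\tfrac{1}{h}\|u\|\,\|u_{x_1}\|$ and $m\,\|u_{x_1}\|^2$ contributions. A symmetric argument with weight $x_1\,u$ handles the top-face trace in the same way.

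The main technical obstacle is the residual $\int_\Omega(\varphi-x_1)\,u u_{x_1 x_1}\,dx$. Here the equation $L(u)=0$ is invoked a second time to substitute $u_{x_1 x_1}=-\tfrac{1}{a_{11}}\sum_{(i,j)\neq(1,1)}a_{ij}u_{x_i x_j}$ (with $a_{11}\ge\lambda$, obtained by applying (\ref{ellipticity}) to $x=e_1$). Every resulting term $u_{x_i x_j}$ carries at least one index $k\ge 2$, so integrating by parts once in that tangential direction produces a bulk piece of shape $\int u_{x_k} u_{x_l}\,dx$ (absorbable into $\|\nabla u\|_{L^2}^2$ with a small coefficient $\epsilon$), a harmless residue involving only $\varphi_{x_k}$ (crucially, no second derivative of the merely Lipschitz function $\varphi$ is ever required), and a boundary contribution on $\Gamma_{top}$ of the form $\int_\omega u u_{x_1}\varphi_{x_k}\big|_{x_1=\varphi}\,dx'$, which is reabsorbed via the mirror Pohozaev identity for the top face. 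Choosing $\epsilon$ small enough closes the estimate and determines the dependence of the constant on $n,\lambda,\Lambda,L,m$. The delicate point throughout is arranging the integrations by parts so that no second derivative of $\varphi$ is ever needed; this is why $L(u)=0$ must be used to eliminate $u_{x_1 x_1}$ \emph{before} rather than after any tangential integration by parts, as a direct second IBP in $x'$ would collide with the limited regularity of $\varphi$.
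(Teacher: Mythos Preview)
Your strategy differs from the paper's, which never tries to control the top/bottom trace integrals directly. Instead the paper (i) averages the energy identity over the family $\Omega_t=[h/2-t,h/2+t]\times\omega$, $t\in[0,h/2]$, and uses monotonicity of $t\mapsto\int_{\Omega_t}|\nabla u|^2$ to get $\int_{\Omega_{h/4}}|\nabla u|^2\le\frac{C}{h}\|u\|\|u_{x_1}\|$; (ii) controls the remaining ``end'' regions by a one-dimensional Hardy inequality together with the weighted estimate $\|\delta\nabla u_{x_1}\|^2\le C\|u_{x_1}\|^2$, which follows from applying Lemma~\ref{lemma,elliptic.gradient} to $u_{x_1}$ (using $L(u_{x_1})=0$). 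The passage from the cylinder $[0,h]\times\omega$ to the variable-top domain is again a Hardy step.

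The gap in your argument is in the treatment of the ``residual'' term $\int_\Omega(\varphi-x_1)\,u\,u_{x_1x_1}$. After substituting for $u_{x_1x_1}$ via the equation and integrating by parts in a tangential direction $x_k$, the main bulk contribution is
\[
\int_\Omega(\varphi-x_1)\,u_{x_k}u_{x_l}\,dx,
\]
which still carries the weight $(\varphi-x_1)$, of size up to $H$. After dividing out $\varphi\ge h$ you obtain a term of order $m\,\|\nabla u\|_{L^2}^2$, with $m=H/h\ge 1$, not a term with a freely chosen small coefficient $\epsilon$; so it cannot be absorbed into the left side. The issue is already visible in the model case $L=\Delta$, $n=2$, $\varphi\equiv h$: your two Pohozaev identities combine exactly to
\[
\int_\Omega (h-x_1)\,|\nabla u|^2\,dx \;=\; \int_\Omega u\,u_{x_1}\,dx,
\]
a weighted estimate that degenerates at $x_1=h$ and does not imply $\|\nabla u\|^2\le \frac{C}{h}\|u\|\,\|u_{x_1}\|$. (Equivalently, summing the top and bottom identities reproduces the basic energy identity and gives nothing new.) A secondary issue: in your first reduction, the $\Gamma_{top}$ terms $a_{ij}\int_\omega u\,u_{x_j}\varphi_{x_i}\,dx'$ with $i,j\ge2$ do not collapse to $u u_{x_1}$--type expressions by tangential integration by parts without invoking $\varphi_{x_ix_j}$, contrary to what you assert. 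The paper's averaging-plus-Hardy mechanism is what produces the genuine $1/h$ gain; a direct trace/Pohozaev calculation does not.
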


\begin{Remark}
The estimate in Theorem~\ref{theorem2.ellipcit.korn} is sharp in the sense, that in the case when $\Omega=[0,h]\times[a,b]$ and $L$ is the Laplace operator, then the inequality becomes an equality for the function
$$u(x)=\cosh\left(\frac{\pi}{b-a}\left(x_1-\frac{h}{2}\right)\right)\sin\left(\frac{\pi x_2}{b-a}\right),$$
provided by Grabovsky and Harutyunyan in [\ref{bib.Grab.Har1}].
\end{Remark}

The next theorem is a variant of Theorem~\ref{theorem2.ellipcit.korn} with the left boundary of the domain being a part of a hyperplane not necessarily perpendicular to the $x_1$ axis.
\begin{Theorem}
\label{theorem3.ellipcit.hyperplane}
Let $\omega\subset\mathbb R^{n-1}$ be a bounded Lipschitz domain, let $\alpha$ be the hyperplane $\{x\in\mathbb R^n \ : \ x_1=a_1+\sum_{i=2}^na_ix_i\}$ and let $x_1=\varphi(x')$ be a Lipschitz function with $H=\sup_{x'\in\omega}\big(\varphi(x')-a_1-\sum_{i=2}^na_ix_i\big)$ and $h=\inf_{x'\in\omega}\big(\varphi(x')-a_1-\sum_{i=2}^na_ix_i\big)>0.$ Denote $\Omega=\{x\in\mathbb R^n \ : \ x'\in\omega, \ a_1+\sum_{i=2}^na_ix_i< x_1< \varphi(x')\}.$ Assume that the operator $L(u)=\sum_{i=1}^nb_{i}u_{ii}$ with constant coefficients with no mixed derivatives satisfies conditions (\ref{ellipticity}) and (\ref{bounded.coeff}). Then there exists a constant $C$ depending on $n,$ $\lambda,$ $\Lambda,$ $L=\mathrm{Lip}(\varphi),$ the ratio $m=H/h$ and the number $A=\max_{1\leq i\leq n}|a_i|,$ such that any $u\in C^3(\bar\Omega)$ solution of $L(u)=0$ in $\Omega$ satisfying the boundary conditions $u(x)=0$ on the portion $\Gamma=\{x\in\partial\Omega \ : \ x'\in\partial\omega\}$ of the boundary of $\Omega$
fulfills the inequality
$$\|\nabla u\|_{L^2(\Omega)}^2\leq \frac{C(n,\lambda,\Lambda,L,m,A)}{h}\|u\|_{L^2(\Omega)}\|u_{x_1}\|_{L^2(\Omega)}+C(n,\lambda,\Lambda,L,m,A)\|u_{x_1}\|_{L^2(\Omega)}^2.$$
\end{Theorem}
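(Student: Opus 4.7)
The plan is to reduce Theorem~\ref{theorem3.ellipcit.hyperplane} directly to Theorem~\ref{theorem2.ellipcit.korn} by an affine change of coordinates that flattens the slanted left boundary into the hyperplane $\{y_1=0\}$. Concretely, I would set
$$y_1=x_1-a_1-\sum_{i=2}^na_ix_i,\qquad y_i=x_i\ (i\geq 2).$$
This is a lower triangular map with unit determinant, so Lebesgue measure is preserved, and the image $\tilde\Omega$ is exactly the domain $\{y'\in\omega,\ 0<y_1<\tilde\varphi(y')\}$ with $\tilde\varphi(y')=\varphi(y')-a_1-\sum_{i=2}^na_iy_i$. By construction $\inf\tilde\varphi=h$ and $\sup\tilde\varphi=H$, so $\tilde m=m$, while $\mathrm{Lip}(\tilde\varphi)\leq L+(n-1)A$. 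The boundary portion $\Gamma$ maps to the corresponding lateral portion of $\partial\tilde\Omega$, so the vanishing of $u$ is preserved.

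Next I would push the operator $L$ through the change of variables. Using $\partial_{x_1}=\partial_{y_1}$ and $\partial_{x_i}=\partial_{y_i}-a_i\partial_{y_1}$ for $i\geq 2$, one gets
$$Lu=\Bigl(b_1+\sum_{i=2}^nb_ia_i^2\Bigr)u_{y_1y_1}+\sum_{i=2}^nb_iu_{y_iy_i}-2\sum_{i=2}^nb_ia_iu_{y_1y_i},$$
which is a constant-coefficient operator $\tilde L$ now carrying mixed derivatives. Its symbol is
$$b_1\xi_1^2+\sum_{i=2}^nb_i(\xi_i-a_i\xi_1)^2,$$
which (using $b_i\geq\lambda$, which follows from (\ref{ellipticity}) applied to the diagonal $L$, together with the invertibility of the linear map $\xi\mapsto(\xi_1,\xi_2-a_2\xi_1,\dots)$) dominates $\tilde\lambda|\xi|^2$ for some $\tilde\lambda=\tilde\lambda(n,\lambda,A)>0$, and whose coefficients are controlled by some $\tilde\Lambda=\tilde\Lambda(n,\Lambda,A)$. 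Hence $\tilde L$ satisfies hypotheses (\ref{ellipticity})–(\ref{bounded.coeff}) with uniform constants, and $\tilde Lu=0$ on $\tilde\Omega$.

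At this point Theorem~\ref{theorem2.ellipcit.korn} applies to $u$ viewed as a function of $y$ on $\tilde\Omega$, producing
$$\|\nabla_y u\|_{L^2(\tilde\Omega)}^2\leq\frac{\tilde C}{h}\|u\|_{L^2(\tilde\Omega)}\|u_{y_1}\|_{L^2(\tilde\Omega)}+\tilde C\|u_{y_1}\|_{L^2(\tilde\Omega)}^2,$$
with $\tilde C=\tilde C(n,\tilde\lambda,\tilde\Lambda,\mathrm{Lip}(\tilde\varphi),\tilde m)$, which is precisely a function of $n,\lambda,\Lambda,L,m,A$. Translating back uses the three norm identities/equivalences $\|u\|_{L^2(\tilde\Omega)}=\|u\|_{L^2(\Omega)}$, $u_{y_1}=u_{x_1}$ pointwise, and
$$c(n,A)\,|\nabla_y u|^2\leq|\nabla_x u|^2\leq C(n,A)\,|\nabla_y u|^2,$$
obtained from the explicit relations $u_{x_1}=u_{y_1}$, $u_{x_i}=u_{y_i}-a_iu_{y_1}$, together with the unit Jacobian.

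The whole argument is therefore a clean reduction; there is no single hard step, and the only real work is the bookkeeping for constants. The one subtlety worth flagging is precisely why Theorem~\ref{theorem3.ellipcit.hyperplane} is stated only for diagonal $L$: the change of variables introduces mixed terms, so if $L$ already carried generic mixed derivatives, after the transformation the symbol might degenerate in a way that depends on the mutual algebraic relations between $a_i$ and $a_{ij}$, and uniform ellipticity of $\tilde L$ in terms of $\lambda,\Lambda,A$ alone would need separate justification. Restricting to $L=\sum b_iu_{ii}$ avoids this issue and gives the explicit formula for the transformed symbol used above.
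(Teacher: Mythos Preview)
Your proposal is correct and follows essentially the same route as the paper: the same affine change of variables $y_1=x_1-a_1-\sum_{i\geq 2}a_ix_i$, $y_i=x_i$, the same computation of the transformed operator and its symbol $b_1\xi_1^2+\sum_{i\geq 2}b_i(\xi_i-a_i\xi_1)^2$, verification of ellipticity and coefficient bounds for the new operator, and a direct appeal to Theorem~\ref{theorem2.ellipcit.korn} on the flattened domain. Your write-up is in fact a bit more explicit than the paper's about the back-translation of norms (the identities $u_{y_1}=u_{x_1}$, $\|u\|_{L^2(\tilde\Omega)}=\|u\|_{L^2(\Omega)}$, and the $(n,A)$-equivalence of $|\nabla_x u|$ and $|\nabla_y u|$), and your closing remark on why the diagonal hypothesis on $L$ is natural is a nice addition.
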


The following theorem is a \textit{strong second Korn inequality} in two dimensional thin curved domains with a nonconstant thickness, which corresponds to the cross sections taken in the angular direction of possibly a cylindrical or spherical shell.
\begin{Theorem}
\label{theorem1.second.korn.dim2}
 Let $l>0,$ let $\varphi_1\in C^1[0,l]$ and let $\varphi_2$ and $\varphi_1'$ be Lipschitz functions defined on $[0,l].$ Assume furthermore that $0<h=\min_{y\in [0,l]}(\varphi_2(y)-\varphi_1(y))$ and $H=\min_{y\in[0,l]}(\varphi_2(y)-\varphi_1(y)).$ Denote $\Omega=\{(x,y)\in\mathbb R^2 \ : \ y\in(0,l),\  \varphi_1(y)< x < \varphi_2(y)\}.$ Then there exists a constant $C$ depending on $m=H/h,$ $\rho_1=\|\varphi_{1}'\|_{L^\infty(\Omega)},$ $\rho_2=\|\varphi_{2}'\|_{L^\infty(\Omega)}$ and $\rho_1'=\|\varphi_{1}''\|_{L^\infty(\Omega)}$ such that if the first component of the displacement $U=(u,v)\in W^{1,2}(\Omega)$ satisfies the boundary conditions $u(x)=0$ on the boundary portion $\Gamma=\{(x,y)\in\partial\Omega \ : \ y=0\ \text{or}\ y=l\}$ in the trace sense, then the strong second Korn inequality holds:

$$\|\nabla U\|_{L^2(\Omega)}^2\leq \frac{C(m,\rho_1,\rho_2,\rho_1')}{h}\|u\|_{L^2(\Omega)}\|e(U)\|_{L^2(\Omega)}+C(m,\rho_1,\rho_2,\rho_1')\|e(U)\|_{L^2(\Omega)}^2.$$
\end{Theorem}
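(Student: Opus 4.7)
The theorem is a one-component reduction away from a Korn-like inequality for a harmonic function, since $u_x = e_{11}(U)$, $v_y = e_{22}(U)$ and $v_x = 2 e_{12}(U) - u_y$; it suffices to prove
\[
\|u_y\|_{L^2(\Omega)}^2 \le \frac{C}{h}\|u\|_{L^2(\Omega)}\|e(U)\|_{L^2(\Omega)} + C\|e(U)\|_{L^2(\Omega)}^2,
\]
after which $\|u_x\|^2+\|v_y\|^2+\|v_x\|^2 \le 10\|e(U)\|^2+2\|u_y\|^2$ closes the estimate. To bring $\Omega$ into the geometry required by Theorem~\ref{theorem2.ellipcit.korn}, I straighten the left boundary by $(\tilde x,\tilde y) = (x-\varphi_1(y),y)$, which has unit Jacobian and sends $\Omega$ onto $\tilde\Omega = \{\tilde y\in(0,l),\,0<\tilde x<h(\tilde y)\}$ with $h(\tilde y) = \varphi_2(\tilde y)-\varphi_1(\tilde y)$, the lateral boundary $\Gamma = \{\tilde y=0,l\}$ being preserved.

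Setting $\tilde u(\tilde x,\tilde y) = u(\tilde x+\varphi_1(\tilde y),\tilde y)$, a direct chain-rule computation yields
\[
\Delta_{(\tilde x,\tilde y)}\tilde u = (1+(\varphi_1')^2)u_{xx} + 2\varphi_1' u_{xy} + u_{yy} + \varphi_1''\,u_x,
\]
which, by the planar strain compatibility identities $u_{xx} = \partial_x e_{11}$, $u_{xy} = \partial_y e_{11}$, $u_{yy} = 2\partial_y e_{12} - \partial_x e_{22}$, is the divergence in $(x,y)$ of a vector field linear in the strain of $U$ with coefficients bounded by $\rho_1$, plus the zero-order piece $\varphi_1''\,e_{11}$. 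I then decompose $\tilde u = \tilde u_h + \tilde q$, where $\tilde q\in H_0^1(\tilde\Omega)$ weakly solves $\Delta\tilde q = \Delta_{(\tilde x,\tilde y)}\tilde u$ and $\tilde u_h$ is harmonic in $\tilde\Omega$ and still vanishes on $\Gamma$. Testing the equation for $\tilde q$ against itself, pulling the test function back to $\Omega$, integrating by parts (boundary terms vanish since $\tilde q|_{\partial\tilde\Omega}=0$), and using the thin-direction Poincar\'e inequality $\|\tilde q\|_{L^2(\tilde\Omega)} \le Ch\|\tilde q_{\tilde x}\|_{L^2(\tilde\Omega)}$ to absorb the zero-order contribution give $\|\nabla \tilde q\|_{L^2(\tilde\Omega)} \le C(\rho_1,\rho_1')\|e(U)\|_{L^2(\Omega)}$. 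For $\tilde u_h$, Theorem~\ref{theorem2.ellipcit.korn} with $L=\Delta$ applies since $\mathrm{Lip}(h)\le \rho_1+\rho_2$, delivering
\[
\|\nabla \tilde u_h\|^2_{L^2(\tilde\Omega)} \le \frac{C(m,\rho_1,\rho_2)}{h}\|\tilde u_h\|_{L^2(\tilde\Omega)}\|\tilde u_{h,\tilde x}\|_{L^2(\tilde\Omega)} + C(m,\rho_1,\rho_2)\|\tilde u_{h,\tilde x}\|^2_{L^2(\tilde\Omega)}.
\]
Substituting $\tilde u_{h,\tilde x} = u_x - \tilde q_{\tilde x}$ and $\|\tilde u_h\|_{L^2}\le \|u\|_{L^2} + \|\tilde q\|_{L^2} \le \|u\|_{L^2} + Ch\|e(U)\|_{L^2}$, adding $\|\nabla\tilde q\|^2$, and finally using $u_y = \tilde u_{\tilde y}-\varphi_1'\,u_x$, so $\|u_y\|^2 \le 2\|\nabla \tilde u\|^2 + 2\rho_1^2\|e(U)\|^2$, closes the one-component estimate.

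I expect the main obstacle to be the divergence-plus-zero-order reformulation of $\Delta_{(\tilde x,\tilde y)}\tilde u$ and the careful pull-back of the test function $\tilde q$ to $\Omega$ in the integration by parts: every derivative of $e(U)$ arising from the compatibility identities must be transferred onto $\tilde q$ without losing a factor of $h^{-1}$, and the zero-order term $\varphi_1''\,e_{11}$ must be absorbed via Poincar\'e in the thin direction, which is what injects $\rho_1'$ (and not any derivative of $\varphi_2$) into the final constant. Dependence on $\rho_2$ enters only through the Lipschitz constant of $h=\varphi_2-\varphi_1$ appearing in the hypotheses of Theorem~\ref{theorem2.ellipcit.korn}.
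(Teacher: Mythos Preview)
Your argument is correct and takes a genuinely different route from the paper's. Both proofs straighten the left boundary via $(\tilde x,\tilde y)=(x-\varphi_1(y),y)$, but the paper first reduces (citing [\ref{bib.Oleinik.shamaev.Yosifian}]) to the case where the displacement $U$ itself is harmonic in $\Omega$; after the change of variables the transformed component $u^1$ then satisfies $L_{\varphi_1'}(u^1)=\Delta u=0$ for the \emph{variable-coefficient} operator $L_a=(1+a^2)\partial_{xx}-2a\,\partial_{xy}+\partial_{yy}-a'\partial_x$ with $a=\varphi_1'$, and the paper develops Lemmas~\ref{lemma,elliptic.gradient2}, \ref{lemma1.elliptic.dim2} and \ref{lemma2.elliptic.dim2} precisely to obtain the Korn-like estimate for solutions of $L_a(u)=0$. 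You instead perform the harmonic decomposition \emph{after} the change of variables and with respect to the new Laplacian: the correction $\tilde q\in H_0^1$ is controlled because $\Delta_{(\tilde x,\tilde y)}\tilde u$ rewrites, via the strain-compatibility identities, as an $(x,y)$-divergence of strains plus the zero-order piece proportional to $\varphi_1'' e_{11}$, and the harmonic part $\tilde u_h$ is handled directly by the constant-coefficient Theorem~\ref{theorem2.ellipcit.korn} with $L=\Delta$. Your route is more economical for this theorem in isolation, since it bypasses the entire variable-coefficient apparatus; the paper's route, on the other hand, produces Lemmas~\ref{lemma1.elliptic.dim2}--\ref{lemma2.elliptic.dim2} as reusable intermediate results, which it then invokes essentially verbatim for the periodic case (Theorem~\ref{theorem.dim2.pariodic}). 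One minor caveat: Theorem~\ref{theorem2.ellipcit.korn} is stated for $C^3(\bar\Omega)$ solutions, whereas your $\tilde u_h=\tilde u-\tilde q$ need not have that regularity up to the Lipschitz boundary even when $U$ is smooth; this is not a genuine obstacle since the inequality extends by approximation, but it deserves a sentence.
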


\begin{Remark}
The estimate in Theorem~\ref{theorem1.second.korn.dim2} is sharp in the sense, that in the case when $\varphi_1\equiv0,$ it becomes an equality for the displacement
$$U=\left(f\left(\frac{y}{h^\alpha}\right),-\frac{x}{h^\alpha}f\left(\frac{y}{h^\alpha}\right)\right),$$
where $f$ is a smooth function supported on $[0,l]$ and $\alpha\in[0,1/2],$ $h\in(0,1).$
\end{Remark}

\begin{Corollary}
\label{1stKorninequality}
Under the conditions of Theorem~\ref{theorem1.second.korn.dim2} first Korn inequality holds:
\begin{equation}
\label{5}
\|\nabla U\|_{L^2(\Omega)}^2\leq \frac{C}{h^2}\|e(U)\|_{L^2(\Omega)}^2.
\end{equation}
\end{Corollary}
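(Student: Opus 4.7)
The plan is to deduce the first Korn inequality from the strong second Korn inequality of Theorem~\ref{theorem1.second.korn.dim2} by bounding the factor $\|u\|_{L^2(\Omega)}$ on the right-hand side in terms of $\|\nabla U\|_{L^2(\Omega)}$ via an $h$-independent Poincar\'e inequality, and then absorbing the resulting $\|\nabla U\|_{L^2(\Omega)}^2$ term into the left-hand side with Young's inequality.

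For the Poincar\'e step I would prove
$$\|u\|_{L^2(\Omega)}\leq C(l,m,\rho_1,\rho_2)\,\|\nabla U\|_{L^2(\Omega)},$$
using the trace condition $u=0$ on $\Gamma=\{y=0\}\cup\{y=l\}$. The natural approach is to flatten $\Omega$ by the bi-Lipschitz change of variables $(x,y)\mapsto(s,y)$ with $s=(x-\varphi_1(y))/(\varphi_2(y)-\varphi_1(y))$, which maps $\Omega$ onto the fixed rectangle $R=(0,1)\times(0,l)$ and preserves the Dirichlet condition at $y=0,l$. The one-dimensional Poincar\'e inequality in $y$ on $R$ gives $\|\tilde u\|_{L^2(R)}\leq (l/\pi)\|\tilde u_y\|_{L^2(R)}$; by the chain rule $\tilde u_y=u_y+(\varphi_1'(y)+s(\varphi_2'(y)-\varphi_1'(y)))u_x$ is controlled by $|u_y|+(\rho_1+\rho_2)|u_x|$, and the Jacobian $\varphi_2-\varphi_1\in[h,H]$ cancels between the two $L^2$ norms up to the ratio $m=H/h$. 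Feeding the result into Theorem~\ref{theorem1.second.korn.dim2} produces
$$\|\nabla U\|_{L^2(\Omega)}^2\leq \frac{C}{h}\|\nabla U\|_{L^2(\Omega)}\|e(U)\|_{L^2(\Omega)}+C\|e(U)\|_{L^2(\Omega)}^2,$$
and Young's inequality $ab\leq \tfrac{1}{2}a^2+\tfrac{1}{2}b^2$ applied to the cross term absorbs half of $\|\nabla U\|_{L^2(\Omega)}^2$ to the left, leaving $\|\nabla U\|_{L^2(\Omega)}^2\leq (C^2/h^2+2C)\|e(U)\|_{L^2(\Omega)}^2\leq C' h^{-2}\|e(U)\|_{L^2(\Omega)}^2$, which is~(\ref{5}).

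The only genuinely nontrivial point is the Poincar\'e estimate, and specifically the claim that its constant stays uniform as $h\to 0$. This should work precisely because the Dirichlet portion $\Gamma$ of the boundary lies on the \emph{long} sides $\{y=0,l\}$: after straightening, the underlying one-dimensional Poincar\'e inequality is taken over the fixed interval $[0,l]$ rather than over fibres of length of order $h$, so no factor of $h$ can arise. The non-constant thickness is then handled cheaply by the change of variables, which contributes only the ratio $m$ and the Lipschitz moduli $\rho_1,\rho_2$, all of which are already among the parameters in the constant of Theorem~\ref{theorem1.second.korn.dim2}.
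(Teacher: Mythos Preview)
Your proposal is correct and follows essentially the same route as the paper: the paper invokes the Friedrichs inequality $\|u\|_{L^2(\Omega)}^2\leq C_{Fr}\|\nabla U\|_{L^2(\Omega)}^2$ and then says that this together with Theorem~\ref{theorem1.second.korn.dim2} completes the proof, which is exactly your Poincar\'e-plus-absorption argument. The only difference is that the paper asserts the Friedrichs step in one line (with the constant said to depend on ``the Lipschitz character of $\Omega$''), whereas you explicitly verify via the straightening $(x,y)\mapsto(s,y)$ that the Poincar\'e constant is independent of $h$ --- a point the paper leaves implicit but which is indeed the crux of the corollary.
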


\begin{proof}
By the Friedrich inequality we have
\begin{equation}
\label{4fr}
\|u\|_{L^2(\Omega)}^2\leq C_{Fr}\|\nabla U\|_{L^2(\Omega)}^2,
\end{equation}
 where $C_{Fr}$ depends on the Lipschitz character of $\Omega.$ Theorem~\ref{theorem1.second.korn.dim2} and (\ref{4fr}) together complete the proof.
\end{proof}

\begin{Remark}
Inequality (\ref{5}) is a generalization of first Korn inequality on rectangles proven by Ryzhak in [\ref{bib.Ryzhak}] and studied later by Grabovsky and Truskinovsky in [\ref{bib.Grab.Trus}].
\end{Remark}

The last theorem is a \textit{strong second Korn inequality} in two dimensional thin curved domains with a nonconstant thickness, which corresponds to cross sections in the thickness direction of possibly a cylindrical or spherical shell.
\begin{Theorem}
\label{theorem.dim2.pariodic}
  Let $l>0,$ $\varphi_1,$ $\varphi_2,$ $\Omega,$ $h,$ $H,$ $m,$ $\rho_1,$ $\rho_2$ and $\rho_1'$ be as in Theorem~\ref{theorem1.second.korn.dim2}. Then there exists a constant $C$ depending on $m,$ $\rho_1,$ $\rho_2$ and $\rho_1'$ such that if the first component $u$ of the displacement $U=(u,v)\in W^{1,2}(\Omega)$ is $l$-periodic, then the second Korn inequality holds:

$$\|\nabla U\|_{L^2(\Omega)}^2\leq \frac{C(m,\rho_1,\rho_2,\rho_1')}{h}\|u\|_{L^2(\Omega)}\|e(U)\|_{L^2(\Omega)}+C(m,\rho_1,\rho_2,\rho_1')\|e(U)\|_{L^2(\Omega)}^2.$$
\end{Theorem}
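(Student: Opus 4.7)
The plan is to mirror the proof of Theorem~\ref{theorem1.second.korn.dim2}, replacing the Dirichlet condition $u|_\Gamma = 0$ by $l$-periodicity of $u$ in $y$ at every step. The first task is to establish a periodic analog of the Korn-like inequality of Theorem~\ref{theorem2.ellipcit.korn}: if $Lu = 0$ in $\Omega$ (in the straightened coordinates that put the operator in constant-coefficient form) and $u$ is $l$-periodic in $y$, then
\[
\|\nabla u\|_{L^2(\Omega)}^2 \leq \frac{C}{h}\|u\|_{L^2(\Omega)}\|u_x\|_{L^2(\Omega)} + C\|u_x\|_{L^2(\Omega)}^2.
\]
I expect this to follow by revisiting the proof of Theorem~\ref{theorem2.ellipcit.korn} and observing that the zero boundary condition on $\Gamma$ enters only through the vanishing of boundary integrals at $y = 0$ and $y = l$ produced by integration by parts; under $l$-periodicity such integrals cancel in pairs and the rest of the estimates carry through unchanged.

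The second step replays the reduction from the strong second Korn inequality to the Korn-like inequality used in Theorem~\ref{theorem1.second.korn.dim2}. Starting from the planar identity
\[
\Delta u = \partial_x\bigl(e_{11}(U) - e_{22}(U)\bigr) + 2\partial_y e_{12}(U),
\]
I would split $u = u_0 + u_1$, where $u_1 \in H^1(\Omega)$ is the weak solution of this Poisson equation with $u_1 = 0$ on the lateral curves $x = \varphi_1(y), \varphi_2(y)$ and $l$-periodic in $y$, while $u_0 = u - u_1$ is harmonic, $l$-periodic, and matches $u$ on the same lateral curves. Testing the equation for $u_1$ against itself and integrating by parts (boundary terms vanish by the Dirichlet part and cancel by the periodic part) yields $\|\nabla u_1\|_{L^2(\Omega)} \leq C\|e(U)\|_{L^2(\Omega)}$. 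Applying the periodic Korn-like inequality from the first step to $u_0$ and using $u_x = e_{11}(U)$ to bound $\|u_{0,x}\|_{L^2(\Omega)}$ by $\|e(U)\|_{L^2(\Omega)} + \|\nabla u_1\|_{L^2(\Omega)}$ delivers the bound on $\|\nabla u\|_{L^2(\Omega)}$; the bound on $\|\nabla v\|_{L^2(\Omega)}$ then follows from $v_y = e_{22}(U)$ and $v_x = 2e_{12}(U) - u_y$ combined with the triangle inequality.

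The main obstacle is the first step: confirming that each boundary integral at $y = 0$ produced by integration by parts in the proof of Theorem~\ref{theorem2.ellipcit.korn} is matched in magnitude, and opposite in orientation, by a corresponding integral at $y = l$, so that the two cancel whenever $u$ is $l$-periodic. This is automatic for integrands with $y$-independent coefficients but requires care wherever those coefficients pick up $y$-dependence through $\varphi_1, \varphi_2$ after any straightening change of variables. The cleanest fallback is to bypass straightening altogether and run the argument directly on $\Omega$, encoding periodicity through a Fourier expansion in $y$ while absorbing the contributions from the curved lateral boundaries via the small parameter $h$, exactly as in the proof of Theorem~\ref{theorem1.second.korn.dim2}.
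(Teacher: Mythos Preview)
Your overall strategy matches the paper's: first obtain a periodic Korn-like inequality for a harmonic (or $L$-harmonic) $u$, then reduce the general case to that one. The paper's proof is literally one line, citing Lemma~\ref{lemma.periodic.2.dim2} for the first step and the standard reduction to harmonic $u$ from [\ref{bib.Oleinik.shamaev.Yosifian}] for the second; your explicit Poisson splitting via $\Delta u=\partial_x(e_{11}-e_{22})+2\partial_y e_{12}$ is a perfectly good concrete implementation of that reduction.

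Where your proposal is imprecise is in the first step. The change of variables $x_1=x-\varphi_1(y)$, $y_1=y$ does \emph{not} put the Laplacian in constant-coefficient form: it produces the variable-coefficient operator $L_a$ with $a(y)=\varphi_1'(y)$, and that is exactly why the paper develops Lemmas~\ref{lemma,elliptic.gradient2}, \ref{lemma1.elliptic.dim2}, \ref{lemma2.elliptic.dim2}, \ref{lemma.periodic.1.dim2} for $L_a$ rather than reusing Theorem~\ref{theorem2.ellipcit.korn}. Your ``main obstacle'' (cancellation of boundary integrals with $y$-dependent coefficients) is resolved not by a Fourier fallback but by the compatibility condition $a(0)=a(l)$, i.e.\ $\varphi_1'(0)=\varphi_1'(l)$, which appears as hypothesis~(ii) in Lemma~\ref{lemma1.elliptic.dim2} and, together with $\varphi_1(0)=\varphi_1(l)$ and $\varphi_2(0)=\varphi_2(l)$, as an explicit assumption in Lemma~\ref{lemma.periodic.2.dim2}. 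These periodicity conditions on the domain are implicit in the statement of Theorem~\ref{theorem.dim2.pariodic} (otherwise ``$u$ is $l$-periodic'' is not well defined on $\Omega$), and you should invoke them rather than try to avoid straightening via Fourier, which does not separate cleanly on a domain with curved lateral boundaries.
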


\section{Preliminary}
\label{section.prem}

We start with a lemma that extends the Laplace operator to any elliptic operator with constant coefficients in Lemma 2.2 in [\ref{bib.Oleinik.shamaev.Yosifian}].

\begin{Lemma}
\label{lemma,elliptic.gradient}
Let $\Omega$ be a bounded open domain in $\mathbb R^n$ with a Lipschitz boundary, and let $\partial\Omega=\Gamma_1\cup\Gamma_2.$ Denote by $\delta$ the distance function from the $\Gamma_1$ part of the boundary of  $\Omega,$ i.e., $\delta(x)=\mathrm{dist}(x,\Gamma_1)$ for $x\in\mathbb R^n.$ Let
the operator $L(u)=\sum_{i,j=1}^na_{ij}\frac{\partial^2u}{\partial x_i\partial x_j}$ with constant coefficients satisfy conditions (\ref{ellipticity}) and ({\ref{bounded.coeff}}). Assume furthermore that the function $f\in C^2(\bar \Omega,\mathbb R)$ satisfies the boundary condition $f(x)=0$ on $\Gamma_2.$ Then
\begin{equation}
\|\delta\nabla f\|_{L^2(\Omega)}^2\leq \Big(\frac{4n\Lambda^2}{\lambda^2}+1\Big)\|f\|_{L^2(\Omega)}^2
+\frac{1}{\lambda^2}\|\delta^2L(f)\|_{L^2(\Omega)}^2.
\end{equation}
\end{Lemma}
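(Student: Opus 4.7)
The plan is a Caccioppoli-type weighted integration by parts. I would start from the ellipticity hypothesis (\ref{ellipticity}) applied to the vector $\xi=\nabla f(x)$, weighted by $\delta(x)^2$ and integrated over $\Omega$, to obtain
$$\lambda\int_\Omega \delta^2|\nabla f|^2\,\mathrm{d}x \;\leq\; \int_\Omega \delta^2\sum_{i,j=1}^n a_{ij}\,f_{x_i}f_{x_j}\,\mathrm{d}x.$$

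Next I would integrate by parts on the right-hand side, moving the $\partial_{x_i}$ derivative off the factor $f_{x_i}$ and onto $\delta^2 a_{ij}f_{x_j}$. The divergence theorem produces a boundary integral of $\delta^2\,f\sum_i a_{ij}f_{x_j}\nu_i$ over $\partial\Omega=\Gamma_1\cup\Gamma_2$, which vanishes identically because $\delta\equiv 0$ on $\Gamma_1$ and $f\equiv 0$ on $\Gamma_2$; this is where both boundary pieces of the hypothesis are consumed. The leftover bulk terms are
$$-\,2\!\int_\Omega \delta\,f\sum_{j}f_{x_j}\!\Bigl(\sum_{i}a_{ij}\delta_{x_i}\Bigr)\mathrm{d}x \;-\;\int_\Omega \delta^2\,f\,L(f)\,\mathrm{d}x.$$

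The distance function $\delta$ is $1$-Lipschitz, so $|\delta_{x_i}|\leq 1$ a.e.; combined with the coefficient bound (\ref{bounded.coeff}) this gives $\bigl|\sum_i a_{ij}\delta_{x_i}\bigr|\leq\Lambda$ for each $j$, and Cauchy--Schwarz in $j$ yields $\bigl|\sum_j f_{x_j}\sum_i a_{ij}\delta_{x_i}\bigr|\leq\sqrt{n}\,\Lambda\,|\nabla f|$. Another Cauchy--Schwarz on the $L(f)$ term then produces the scalar inequality
$$\lambda\,\|\delta\nabla f\|_{L^2}^2 \;\leq\; 2\sqrt{n}\,\Lambda\,\|f\|_{L^2}\|\delta\nabla f\|_{L^2} \;+\; \|f\|_{L^2}\|\delta^2 L(f)\|_{L^2}.$$

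Finally I would apply Young's inequality twice with carefully chosen parameters: on the cross term $\|f\|_{L^2}\|\delta\nabla f\|_{L^2}$ take $\varepsilon=\lambda/2$ so that exactly half of $\lambda\|\delta\nabla f\|_{L^2}^2$ absorbs it, leaving the residue $(2n\Lambda^2/\lambda)\|f\|_{L^2}^2$; on $\|f\|_{L^2}\|\delta^2 L(f)\|_{L^2}$ take parameter $\alpha=\lambda$, which produces exactly $\tfrac{1}{2\lambda}\|\delta^2 L(f)\|_{L^2}^2+\tfrac{\lambda}{2}\|f\|_{L^2}^2$. Rearranging and multiplying by $2/\lambda$ delivers the coefficients $4n\Lambda^2/\lambda^2+1$ and $1/\lambda^2$ claimed in the statement. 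There is no deep analytic difficulty: the divergence theorem against the Lipschitz weight $\delta^2\in W^{1,\infty}(\Omega)$ and the $C^2$ function $f$ on a Lipschitz domain is standard, and the main obstacle is purely arithmetic, namely matching the Young parameters so as to land on the exact numerical constants appearing on the right-hand side.
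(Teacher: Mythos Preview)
Your proposal is correct and follows essentially the same approach as the paper: weighted ellipticity, integration by parts with the boundary term killed by $\delta|_{\Gamma_1}=0$ and $f|_{\Gamma_2}=0$, then Young's inequality with the same parameter choices to land on the stated constants. The only cosmetic difference is that you bound the cross term via Cauchy--Schwarz in $j$ to get $\sqrt{n}\,\Lambda\,|\nabla f|$ directly, whereas the paper first bounds by $\Lambda\sum_j|f_{x_j}|$ and later invokes $(\sum_j|f_{x_j}|)^2\le n|\nabla f|^2$; the two routes yield identical constants.
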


\begin{proof}

Utilizing elipticity condition (\ref{ellipticity}) we get integrating by parts,
\begin{align*}
\lambda\int_{\Omega}\delta^2|\nabla f|^2\ud x&\leq \int_{\Omega}\delta^2\sum_{i,j=1}^na_{ij}\frac{\partial f}{\partial x_i}\frac{\partial f}{\partial x_j}\\
&=-\int_{\Omega}f\cdot \sum_{i=1}^n \frac{\partial}{\partial x_i}\Big(\delta^2\sum_{j=1}^na_{ij}\frac{\partial f}{\partial x_j}\Big)\ud x\\
&=-\int_{\Omega}f\cdot\delta^2L(f)\ud x-2\int_{\Omega}f\cdot\sum_{i,j=1}^n a_{ij}\delta\frac{\partial\delta }{\partial x_i}\frac{\partial f }{\partial x_j}\ud x.
\end{align*}
Notice that $\delta$ is a Lipschitz function with partial derivatives bounded by one, thus we have by the Schwartz inequality,

\begin{align*}
2\Big|\int_{\Omega}f\cdot\sum_{i,j=1}^n a_{ij}\delta\frac{\partial\delta }{\partial x_i}\frac{\partial f }{\partial x_j}\ud x\Big|&\leq
2\int_{\Omega}\Lambda|f|\delta\cdot\sum_{i=1}^n\Big|\frac{\partial f }{\partial x_i}\Big|\ud x\\
&\leq \frac{2\Lambda^2n}{\lambda}\int_{\Omega}|f|^2\ud x+\frac{\lambda}{2n}\int_{\Omega}\delta^2\Big(\sum_{i=1}^n\big|\frac{\partial f }{\partial x_i}\big|\Big)^2\ud x\\
&\leq \frac{2\Lambda^2n}{\lambda}\int_{\Omega}|f|^2\ud x+\frac{\lambda}{2}\int_{\Omega}|\delta\nabla f|^2\ud x.
\end{align*}

Applying again the Schwartz inequality we can estimate the first summand as follows

$$\Big|\int_{\Omega}f\cdot\delta^2L(f)\ud x\Big|\leq \frac{\lambda}{2}\int_{\Omega}|f|^2\ud x+\frac{1}{2\lambda}\int_{\Omega}|\delta^2L(f)|^2\ud x.$$
Combining now the obtained estimates we discover,
$$\|\delta\nabla f\|_{L^2(\Omega)}^2\leq \Big(\frac{2\Lambda^2n}{\lambda^2}+\frac{1}{2}\Big)\|f\|_{L^2(\Omega)}^2
+\frac{1}{2\lambda^2}\|\delta^2L(f)\|_{L^2(\Omega)}^2+\frac{1}{2}\|\delta\nabla f\|_{L^2(\Omega)}^2,$$
which completes the proof.
\end{proof}

The next lemma is a variant of Lemma~\ref{lemma,elliptic.gradient} in the space dimension two, with some nonconstant coefficient operator. As will be seen later in Section~\ref{section.fixed.boundary.conditions}, such kind of operator appears when mapping a curved region onto another one with a left boundary being a vertical segment, see proof of Theorem~\ref{theorem1.second.korn.dim2}.

\begin{Lemma}
\label{lemma,elliptic.gradient2}
Let $\Omega,$ $\Gamma_1,$ $\Gamma_2$ and $\delta$ be as in Lemma~\ref{lemma,elliptic.gradient} with $n=2.$ Assume $a(y)\in C^1(\bar\Omega,\mathbb R)$ and consider the operator $L_a(u)=(1+a^2(y))u_{xx}-2a(y)u_{xy}+u_{yy}-a'(y)u_x.$ Then there exists a constant $C$ depending on the quantities $M=\|a\|_{L^\infty(\bar\Omega)}$ and $M_1=\|a'\|_{L^\infty(\bar\Omega)},$ such that
if the function $f\in C^2(\bar\Omega,\mathbb R)$ satisfies the condition
\begin{equation}
\label{Zero.boundary.integral}
\int_{\partial\Omega}f\delta^2(f_x\nu_1(1+a^2(y))-2a(y)f_x\nu_2+f_y\nu_2)\ud S=0,
\end{equation}
where $\nu=(\nu_1,\nu_2)$ is the outward unit normal to $\partial\Omega,$ then there holds
$$
\|\delta\nabla f\|_{L^2(\Omega)}^2\leq C(M,M_1)\big(\|(1+\delta)f\|_{L^2(\Omega)}^2+\|\delta^2L_a(f)\|_{L^2(\Omega)}^2\big).
$$

\end{Lemma}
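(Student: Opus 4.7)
The approach follows the template of Lemma~\ref{lemma,elliptic.gradient}: form the weighted Dirichlet energy
\[
\int_\Omega \delta^2\,\nabla f\cdot A\nabla f\,\ud x,\qquad A(y)=\begin{pmatrix}1+a^2(y) & -a(y)\\ -a(y) & 1\end{pmatrix},
\]
bound it below by $\lambda(M)\|\delta\nabla f\|_{L^2(\Omega)}^2$ using ellipticity (the smallest eigenvalue of $A(y)$ is a positive quantity depending only on $M=\|a\|_\infty$, since $A$ is symmetric positive definite with eigenvalues $\tfrac{2+a^2\pm|a|\sqrt{a^2+4}}{2}$), integrate by parts to produce $\delta^2 L_a(f)$, and then use Cauchy--Schwarz together with AM--GM to absorb half of $\|\delta\nabla f\|_{L^2}^2$ back into the left-hand side.

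The key structural observation is that $L_a$ is exactly the divergence-form operator associated with $A$: $L_a(u)=\nabla\cdot(A\nabla u)$, the lower-order term $-a'(y)u_x$ arising precisely from $\partial_y(-a(y)u_x)$. Consequently Green's identity gives
\[
\int_\Omega\delta^2\nabla f\cdot A\nabla f\,\ud x=\int_{\partial\Omega}\delta^2 f\,(A\nabla f)\cdot\nu\,\ud S-2\int_\Omega\delta f\,\nabla\delta\cdot A\nabla f\,\ud x-\int_\Omega\delta^2 f\,L_a(f)\,\ud x.
\]
The two bulk integrals I would treat exactly as in Lemma~\ref{lemma,elliptic.gradient}: using $|\nabla\delta|\le 1$ and $|A|\le C(M)$, the cross term is bounded by $\tfrac{\lambda(M)}{2}\|\delta\nabla f\|_{L^2}^2+C(M)\|f\|_{L^2}^2$, and the $L_a$ term by $\tfrac12\|f\|_{L^2}^2+\tfrac12\|\delta^2 L_a f\|_{L^2}^2$. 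Absorbing then leaves only the boundary contribution.

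The main obstacle is the boundary integral. Its natural integrand reads
\[
\delta^2 f\bigl[(1+a^2)f_x\nu_1-af_y\nu_1-af_x\nu_2+f_y\nu_2\bigr],
\]
which differs from the integrand in (\ref{Zero.boundary.integral}) by the expression $\delta^2 f\,a(y)(f_x\nu_2-f_y\nu_1)=-\tfrac12\delta^2 a(y)\,\partial_\tau(f^2)$, a total tangential derivative (here $\tau$ is a unit tangent to $\partial\Omega$). Since $\partial\Omega$ is closed, a tangential integration by parts shifts the derivative onto $\delta^2 a(y)$, producing residual boundary integrals of the form $\int_{\partial\Omega}\delta|a|f^2\,\ud S$ and $\int_{\partial\Omega}\delta^2|a'|f^2\,\ud S$, controlled by $M$ and $M_1$ respectively. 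A weighted trace inequality of the type $\int_{\partial\Omega}\delta^k f^2\,\ud S\le C(\|(1+\delta)f\|_{L^2(\Omega)}^2+\varepsilon\|\delta\nabla f\|_{L^2(\Omega)}^2)$ (obtained by applying the divergence theorem to $\delta^k f^2 V$ for any smooth extension $V$ of the unit outer normal, and using $\delta\le\operatorname{diam}\Omega$) will convert these into a bulk bound of precisely the form $C(M,M_1)\|(1+\delta)f\|_{L^2(\Omega)}^2$, with an $\varepsilon\|\delta\nabla f\|_{L^2}^2$ piece that can be absorbed into the left-hand side alongside the other absorption step. This is the source of both the $M_1$-dependence and the $(1+\delta)$-weight in the final inequality; tracking the constants through the double absorption is the delicate point.
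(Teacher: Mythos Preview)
Your overall template---weighted Dirichlet form, ellipticity lower bound, integration by parts, Cauchy--Schwarz with absorption---is exactly the paper's. The divergence lies in how the boundary term is disposed of, and there your route, as written, does not deliver the stated lemma.

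The paper does \emph{not} use your symmetric $A$. It integrates by parts with the non-symmetric choice
\[
B^{T}=\begin{pmatrix}1+a^{2}&0\\-2a&1\end{pmatrix},
\]
which gives the same quadratic form (since $\nabla f\cdot B^{T}\nabla f=\nabla f\cdot A\nabla f$) but whose boundary flux $(B^{T}\nabla f)\cdot\nu=(1+a^{2})f_{x}\nu_{1}-2af_{x}\nu_{2}+f_{y}\nu_{2}$ is \emph{precisely} the integrand in (\ref{Zero.boundary.integral}). Thus the boundary term vanishes by hypothesis and no trace estimate is needed. The price is that $\nabla\cdot(B^{T}\nabla f)=L_{a}(f)-a'(y)f_{x}$, so one extra bulk term $\int_{\Omega}\delta^{2}a'ff_{x}$ appears; a single Cauchy--Schwarz bounds it by $\tfrac{2M_{1}^{2}}{\lambda_{a}}\|\delta f\|^{2}+\tfrac{\lambda_{a}}{8}\|\delta\nabla f\|^{2}$, and this is the entire source of both the $M_{1}$-dependence and the $(1+\delta)$-weight. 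All constants depend only on $M$ and $M_{1}$.

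Your proposed fix has a genuine gap. The trace-type bound $\int_{\partial\Omega}\delta^{k}f^{2}\,\ud S\le C(\|(1+\delta)f\|_{L^{2}(\Omega)}^{2}+\varepsilon\|\delta\nabla f\|_{L^{2}(\Omega)}^{2})$, obtained by applying the divergence theorem to $\delta^{k}f^{2}V$ with $V$ an extension of the outer normal, carries constants that depend on $\|V\|_{\infty}$, $\|\nabla\cdot V\|_{\infty}$, and the Lipschitz character of $\partial\Omega$---that is, on the geometry of $\Omega$. The lemma claims $C=C(M,M_{1})$ with no domain dependence, so your argument as stated would prove only a weaker result. The ``obstacle'' you identify is self-inflicted: hypothesis (\ref{Zero.boundary.integral}) was written down to match the non-symmetric integration by parts, not the symmetric one.

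(Your detour \emph{can} be rescued with domain-independent constants if, instead of a generic trace inequality, you rewrite $\tfrac12\int_{\partial\Omega}f^{2}\partial_{\tau}(\delta^{2}a)\,\ud S$ via $\partial_{\tau}g=(g_{y},-g_{x})\cdot\nu$ and the divergence theorem, using that $(g_{y},-g_{x})$ is divergence-free; the resulting bulk integral involves only $\delta$, $\nabla\delta$, $a$, $a'$, $f$, $\nabla f$ and is estimated exactly as in the paper. But this just reconstructs the paper's extra terms by a longer path.)
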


\begin{proof}

It is straightforward that
$$(1+a^2)t^2-2ats+s^2\geq \lambda_a(t^2+s^2)\quad\text{for any}\quad (s,t)\in\mathbb R^2,$$
where
\begin{equation}
\label{l_a>l}
\lambda_a=\frac{2}{2+a^2+a\sqrt{4+a^2}}\geq\frac{2}{2+M^2+M\sqrt{4+M^2}}:=\lambda.
\end{equation}
Like in the proof of Lemma~\ref{lemma,elliptic.gradient}, we will have zero boundary term integral when doing integration by parts as condition (\ref{Zero.boundary.integral}) ensures, thus we obtain,

\begin{align*}
\lambda_a\int_{\Omega}|\delta\nabla f|^2 &\leq \int_{\Omega}\delta^2\big((1+a^2)f_{x}^2-2af_xf_y+f_y^2\big)\\
&=-\int_{\Omega}f\delta^2L_a(f)-2\int_{\Omega}f\delta\big((1+a^2)\delta_xf_x-2a\delta_xf_y+\delta_yf_y\big)-\int_{\Omega}ff_x\delta^2a'.
\end{align*}
We estimate the summands on the left hand side of the above inequality by the Schwartz inequality as follows:
\begin{align*}
-\int_{\Omega}f\delta^2L_a(f)\leq \frac{1}{2}\int_{\Omega}|f|^2+\frac{1}{2}\int_{\Omega}|\delta^2L_a(f)|^2,
\end{align*}
\begin{align*}
-2\int_{\Omega}f\delta\big((1+a^2)\delta_xf_x&\leq 2(1+M^2)\int_{\Omega}|f\delta f_x|\\
&\leq \frac{8(1+M^2)^2}{\lambda_a}\int_{\Omega}|f|^2+\frac{\lambda_a}{8}\int_{\Omega}|\delta \nabla f|^2,
\end{align*}
\begin{align*}
4\int_{\Omega}f\delta a\delta_xf_y&\leq 4M\int_{\Omega}|f\delta f_y|\\
&\leq \frac{32M^2}{\lambda_a}\int_{\Omega}|f|^2+\frac{\lambda_a}{8}\int_{\Omega}|\delta \nabla f|^2,
\end{align*}
\begin{align*}
-2\int_{\Omega}f\delta\delta_yf_y&\leq 2\int_{\Omega}|f\delta f_y|\\
&\leq  \frac{8}{\lambda_a}\int_{\Omega}|f|^2+\frac{\lambda_a}{8}\int_{\Omega}|\delta \nabla f|^2,
\end{align*}
and
\begin{align*}
-\int_{\Omega}ff_x\delta^2a'&\leq M_1\int_{\Omega}|ff_x\delta^2|\\
&\leq \frac{2M_1^2}{\lambda_a}\int_{\Omega}|\delta f|^2+\frac{\lambda_a}{8}\int_{\Omega}|\delta \nabla f|^2.
\end{align*}

Combining the obtained inequalities we obtain

$$\lambda_a\int_{\Omega}|\delta\nabla f|^2\leq \frac{\lambda_a}{2}\int_{\Omega}|\delta \nabla f|^2+\frac{M_2}{\lambda_a}\int_{\Omega}|(1+\delta)f|^2+\frac{1}{2}\int_{\Omega}|\delta^2L_a(f)|^2,$$
where $M_2$ depends only on $M$ and $M_1.$ Finally dividing the inequality by $\lambda_a$ and utilizing the inequality $\lambda_a\geq \lambda$ we
finish the proof.
\end{proof}

\begin{Remark}
\label{Zero.imply.zero.bd.integral}
Notice that if $f(x)=0$ for $x\in\Gamma_2$ then condition (\ref{Zero.boundary.integral}) is fulfilled.
\end{Remark}
\begin{proof}
It is evident that $f\delta=0$ on $\partial \Omega,$ thus the proof follows.
\end{proof}
The following lemma is a variant of Hardy inequality, a partial case of which is proven in [\ref{bib.Oleinik.shamaev.Yosifian}, Theorem 2.1].
It actually allows to extend Korn type inequalities held in the cylindrical domains with axis parallel to $x_1$ direction,
to domains with the left boundary, orientation taken in the $x_1$ direction, being a part of a hyperplane and the right boundary, orientation taken again in the $x_1$ direction, being a Lipschitz surface. Basically, it derives Theorem~\ref{theorem2.ellipcit.korn} from Lemma~\ref{lemma1.ellipcit.korn} as will be seen in the next section.
\begin{Lemma}
\label{lemma,a,epsilon}
Let $b>a>0,$ $\epsilon\in(0,1],$ and let $f\in C^1[a,b].$ Then
$$\int_{a+\epsilon(b-a)}^bf^2(t)\ud t\leq \frac{2}{\epsilon}\int_a^{a+\epsilon(b-a)}f^2(t)\ud t+4\int_a^{b}f'^2(t)(b-t)^2\ud t.$$
\end{Lemma}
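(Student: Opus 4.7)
The plan is to treat the inequality as a weighted Hardy-type bound on the subinterval $[c,b]$ with $c:=a+\epsilon(b-a)$. First I would integrate by parts, using $-(b-t)$ as an antiderivative of $1$, to obtain
$$\int_c^b f^2(t)\ud t=(b-c)f^2(c)+2\int_c^b (b-t)f(t)f'(t)\ud t.$$
Applying Young's inequality $2(b-t)|ff'|\le \frac{1}{2}f^2+2(b-t)^2 f'^2$ to the bilinear term and absorbing the resulting $f^2$ integral on the left yields
$$\int_c^b f^2(t)\ud t\le 2(b-c)f^2(c)+4\int_c^b (b-t)^2 f'^2(t)\ud t.$$
This reduces the problem to controlling the pointwise value $f^2(c)$ by the $L^2$ mass of $f$ on $[a,c]$ and the weighted $L^2$ mass of $f'$ on $[a,b]$.

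For the boundary value I would start from the identity
$$\int_a^c (f(c)-f(s))^2\ud s=\int_a^c \left(\int_s^c f'(\tau)\ud\tau\right)^2\ud s,$$
expand the square on the left, and apply a Young-type inequality with a free parameter $\eta\in(0,1)$ to the cross term $2f(c)\int_a^c f(s)\ud s$. On the right I would introduce the weight through $f'(\tau)=(b-\tau)^{-1}\cdot(b-\tau)f'(\tau)$, apply the Cauchy--Schwarz inequality, and use the elementary bound $\int_s^c (b-\tau)^{-2}\ud\tau=\frac{c-s}{(b-c)(b-s)}\le \frac{\epsilon}{(1-\epsilon)(b-a)}$ valid for $s\in[a,c]$. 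These manipulations produce
$$f^2(c)\le \frac{1}{\eta\epsilon(b-a)}\int_a^c f^2(s)\ud s+\frac{\epsilon}{(1-\eta)(1-\epsilon)(b-a)}\int_a^c (b-\tau)^2 f'^2(\tau)\ud\tau.$$

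Multiplying through by $2(b-c)=2(1-\epsilon)(b-a)$ and inserting into the previous estimate, I arrive at
$$\int_c^b f^2(t)\ud t\le \frac{2(1-\epsilon)}{\eta\epsilon}\int_a^c f^2\ud s+\frac{2\epsilon}{1-\eta}\int_a^c (b-\tau)^2 f'^2\ud\tau+4\int_c^b (b-t)^2 f'^2\ud t.$$
To close the argument I choose $\eta$ so that both $\frac{2(1-\epsilon)}{\eta\epsilon}\le\frac{2}{\epsilon}$ and $\frac{2\epsilon}{1-\eta}\le 4$; any $\eta\in[1-\epsilon,\,1-\epsilon/2]$ (for instance $\eta=1-\epsilon/2$) satisfies both, and the two $(b-t)^2 f'^2$ integrals then merge into a single $4\int_a^b (b-t)^2 f'^2\ud t$, giving the stated bound.

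The delicate point is this joint constant-tuning. A naive application of Young's inequality to $f^2(c)$ without the auxiliary parameter $\eta$ produces the coefficient $4(1-\epsilon)/\epsilon$ in front of $\int_a^c f^2$, which is off by nearly a factor of two for small $\epsilon$; carrying the extra parameter through both estimates and optimising it at the end is what makes the sharp value $2/\epsilon$ reachable without driving the weighted $f'^2$ constant above $4$.
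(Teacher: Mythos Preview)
Your argument is correct, and it reaches the exact constants $2/\epsilon$ and $4$ as stated. The route, however, is genuinely different from the paper's. The paper first reduces to $a=0$, then integrates by parts on $[0,x]$ with the weight $t$ (not $b-t$), obtaining $\int_0^{x}f^2\le 2xf^2(x)+4\int_0^b t^2f'^2$; the key device is the mean value theorem, which furnishes a point $x\in((1-\epsilon)b,b)$ with $f^2(x)=\frac{1}{\epsilon b}\int_{(1-\epsilon)b}^b f^2$, so the pointwise value is replaced by an average \emph{for free}, without any parameter. A final reflection $g(t)=f(b-t)$ converts the $t^2$ weight into $(b-t)^2$ and yields the stated form. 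By contrast, you work directly with the weight $(b-t)$ and the fixed endpoint $c$, and compensate for the absence of a mean-value trick by carrying the auxiliary parameter $\eta$ through a weighted Cauchy--Schwarz estimate on $f(c)-f(s)=\int_s^c f'$, exploiting the sharp bound $\frac{c-s}{(b-c)(b-s)}\le\frac{\epsilon}{(1-\epsilon)(b-a)}$. What your approach buys is a fully quantitative, parameter-optimised argument that never leaves the original interval and avoids both the reflection and the existence step; what the paper's approach buys is brevity---the mean value theorem replaces your entire $\eta$-optimisation in one line. Both are legitimate Hardy-type proofs; yours is more ``hard analysis'', the paper's more ``soft''. (For $\epsilon=1$ your intermediate bound on $f^2(c)$ degenerates, but the inequality is then $0\le\text{nonnegative}$ and needs no argument.)
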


\begin{proof}
First of all notice that the inequality is invariant under variable translation, therefore one can without loss of generality
assume that $a=0.$ By the Schwartz inequality and by integration by parts we have for any $x\in ((1-\epsilon)b,b)$

\begin{align*}
\int_0^{x}f^2(t)\ud t&=xf^2(x)-2\int_0^{x}tf'(t)f(t)\ud t\\
&\leq xf^2(x)+\frac{1}{2}\int_0^{x}f^2(t)\ud t+2\int_0^{x}t^2f'^2(t)\ud t\\
&\leq xf^2(x)+\frac{1}{2}\int_0^{x}f^2(t)\ud t+2\int_0^bt^2f'^2(t)\ud t,
\end{align*}

thus
\begin{equation}
\label{1}
\int_0^{x}f^2(t)\ud t\leq 2xf^2(x)+4\int_0^bt^2f'^2(t)\ud t.
\end{equation}
By the mean value formula we can choose the point $x\in ((1-\epsilon)b,b)$ such that
$$f^2(x)=\frac{1}{\epsilon b}\int_{(1-\epsilon)b}^bf^2(t)\ud t,$$
thus with this choice of $x$ estimate (\ref{1}) will imply
\begin{align*}
\int_0^{(1-\epsilon)b}f^2(t)\ud t&\leq \int_0^{x}f^2(t)\ud t\\
&\leq 2xf^2(x)+4\int_0^bt^2f'^2(t)\ud t\\
&\leq \frac{2x}{\epsilon b}\int_{(1-\epsilon)b}^bf^2(t)\ud t+4\int_0^bt^2f'^2(t)\ud t\\
&\leq \frac{2}{\epsilon}\int_{(1-\epsilon)b}^bf^2(t)\ud t+4\int_0^bt^2f'^2(t)\ud t.
\end{align*}

Now, an application of the last inequality to the function $g(t)=f(b-t)$ with the variable change $t=b-x$
completes the proof.
\end{proof}

\section{Korn like inequalities in all dimensions}
\label{section.Boundsonthegradient}

Next we prove a \textit{Korn-like} inequality for the solutions of elliptic PDEs in cylindrical domains.
\begin{Lemma}
\label{lemma1.ellipcit.korn}
Let $\omega\subset\mathbb R^{n-1}$ be a bounded Lipschitz domain, let $h>0$ and $\Omega=[0,h]\times\omega.$ Assume the operator $L(u)=\sum_{i,j=1}^na_{ij}\frac{\partial^2u}{\partial x_i \partial x_j}$ with constant coefficients satisfies conditions (\ref{ellipticity}) and (\ref{bounded.coeff}). Then there exists a constant $C$ depending on $n,$ $\lambda$ and $\Lambda$ such that any $u\in C^3(\bar\Omega)$ solution of
$L(u)=0$ satisfying the boundary conditions $u(x)=0$ on $[0,h]\times\partial\omega$
fulfills the inequality
$$\|\nabla u\|_{L^2(\Omega)}^2\leq \frac{C(n,\lambda,\Lambda)}{h}\|u\|_{L^2(\Omega)}\|u_{x_1}\|_{L^2(\Omega)}+C(n,\lambda,\Lambda)\|u_{x_1}\|_{L^2(\Omega)}^2.$$
\end{Lemma}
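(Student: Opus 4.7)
The plan is to prove the inequality in three stages: first sharpen the weighted energy estimate of Lemma~\ref{lemma,elliptic.gradient} in this cylindrical geometry, then apply that lemma to $u_{x_1}$ itself to control weighted second derivatives, and finally use the Hardy-type Lemma~\ref{lemma,a,epsilon} to transfer the resulting middle-slab bound to the full domain.

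First I would re-run the integration by parts from the proof of Lemma~\ref{lemma,elliptic.gradient} with $\Gamma_2=[0,h]\times\partial\omega$ and $\delta(x)=\min(x_1,h-x_1)$, producing
\[
\int_\Omega\delta^2\sum_{i,j} a_{ij}u_{x_i}u_{x_j}\,dx=-2\int_\Omega u\,\delta\,\delta_{x_1}\sum_j a_{1j}u_{x_j}\,dx
\]
(the $\delta^2L(u)$ term drops since $L(u)=0$, and the boundary contribution vanishes because $u=0$ on $\Gamma_2$ while $\delta=0$ on $\Gamma_1$). The crucial observation is that $\delta\,\delta_{x_1}$ depends only on $x_1$, so for each $j\ge 2$ the inner cross-section integral $\int_\omega u\,u_{x_j}\,dx'=\tfrac12\int_{\partial\omega}u^2\nu_j\,dS=0$ by the divergence theorem (since $u|_{\partial\omega}=0$ for each fixed $x_1$). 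Only the $j=1$ term survives, and Cauchy--Schwarz together with the trivial bound $\|\delta u_{x_1}\|_{L^2}\le(h/2)\|u_{x_1}\|_{L^2}$ then yields the sharpened estimate
\[
\int_\Omega\delta^2|\nabla u|^2\,dx\le\frac{\Lambda h}{\lambda}\,\|u\|_{L^2(\Omega)}\|u_{x_1}\|_{L^2(\Omega)}.
\]
Since $\delta\ge h/4$ on the middle slab $\Omega_{\rm mid}=[h/4,3h/4]\times\omega$, this already gives $\int_{\Omega_{\rm mid}}|\nabla u|^2\le(16\Lambda/\lambda h)\|u\|\|u_{x_1}\|$, which has precisely the desired form.

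Next, because $L$ has constant coefficients, $L(u_{x_1})=0$, and $u_{x_1}=0$ on $[0,h]\times\partial\omega$ as well. Applying Lemma~\ref{lemma,elliptic.gradient} to $u_{x_1}$ (its unsharpened form suffices) gives $\int_\Omega\delta^2|\nabla u_{x_1}|^2\,dx\le C(n,\lambda,\Lambda)\|u_{x_1}\|^2$, and in particular $\int_\Omega\delta^2 u_{x_1x_j}^2\,dx\le C\|u_{x_1}\|^2$ for every $j$. Then for each fixed $x'\in\omega$ and each coordinate $j$, applying Lemma~\ref{lemma,a,epsilon} to $f(t)=u_{x_j}(t,x')$ on $[h/4,h]$ with $\epsilon=2/3$ (and its mirror image on $[0,3h/4]$ via the substitution $t\mapsto h-t$) bounds the strip integrals $\int_0^{h/4}u_{x_j}^2\,dt$ and $\int_{3h/4}^h u_{x_j}^2\,dt$ by $3\int_{h/4}^{3h/4}u_{x_j}^2\,dt$ plus weighted integrals of $u_{x_1x_j}^2$. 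Since $(h-t)\le 3\delta(t)$ on $[h/4,h]$ and $t\le 3\delta(t)$ on $[0,3h/4]$, these weighted integrals are controlled by $36\int_\Omega\delta^2 u_{x_1x_j}^2\le 36C\|u_{x_1}\|^2$ via the previous step. Combining with the middle-slab bound from the first stage and summing over $j=1,\dots,n$ produces the claimed inequality, with constants depending only on $n$, $\lambda$, $\Lambda$.

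The main obstacle is the sharpening in the first stage: the vanishing of the $j\ge 2$ cross-terms is precisely what upgrades Lemma~\ref{lemma,elliptic.gradient}'s conclusion $\int\delta^2|\nabla u|^2\le C\|u\|^2$ to the sharper $Ch\|u\|\|u_{x_1}\|$. Without this observation the middle-slab estimate would only yield $\|\nabla u\|^2\le C\|u\|^2/h^2$, which is strictly weaker than the claimed bound in the asymptotic regime $\|u_{x_1}\|\ll\|u\|/h$ realised by the sharp example recalled in the Remark after Theorem~\ref{theorem2.ellipcit.korn}, and the sharpness assertion of that remark would be lost.
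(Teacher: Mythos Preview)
Your proof is correct and follows essentially the same three-stage strategy as the paper: a sharp middle-slab estimate exploiting that the cross-section integrals $\int_\omega u\,u_{x_j}\,dx'$ vanish for $j\ge2$, then Lemma~\ref{lemma,elliptic.gradient} applied to $u_{x_1}$, then the Hardy-type Lemma~\ref{lemma,a,epsilon} to pass from the middle slab to the full cylinder. The only cosmetic difference is in stage one: you obtain the middle-slab bound via a $\delta^2$-weighted integration by parts (noting $\delta$ depends on $x_1$ alone), whereas the paper integrates by parts directly on the unweighted slabs $\Omega_t=[h/2-t,h/2+t]\times\omega$, integrates the resulting boundary identity over $t\in[0,h/2]$, and uses monotonicity of $t\mapsto\int_{\Omega_t}|\nabla u|^2$.
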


\begin{proof}
For any $t\in[0,h/2]$ denote $\Omega_t=[h/2-t,h/2+t]\times\omega$ and $\Omega_t'=[0,t]\times\omega.$ We have integrating by parts,
\begin{align*}
\lambda\int_{\Omega_t}|\nabla u|^2\ud x&\leq\int_{\Omega_t}\sum_{i,j=1}^na_{ij}\frac{\partial u}{\partial x_i}\frac{\partial u}{\partial x_j}\ud x\\
&=a_{11}\left(\int_{\{h/2+t\}\times\omega}u(x')u_{x_1}(x')\ud x'-\int_{\{h/2-t\}\times\omega}u(x')u_{x_1}(x')\ud x'\right)-\int_{\Omega_t}uL(u)\ud x\\
&=a_{11}\left(\int_{\{h/2+t\}\times\omega}u(x')u_{x_1}(x')\ud x'-\int_{\{h/2-t\}\times\omega}u(x')u_{x_1}(x')\ud x'\right),
\end{align*}
 where $x'=(x_2, x_3,\dots,x_n).$ Thus we can estimate
$$\lambda\int_{\Omega_t}|\nabla u|^2\ud x\leq \Lambda\int_{\{h/2+t\}\times\omega}|u(x')u_{x_1}(x')|\ud x'+\Lambda\int_{\{h/2-t\}\times\omega}|u(x')u_{x_1}(x')|\ud x'.$$
Integrating now the last inequality over $[0,h/2]$ and utilizing the Schwartz inequality we discover

$$\lambda\int_{0}^{h/2}\ud t\int_{\Omega_t}|\nabla u|^2\ud x\leq \Lambda\|u\|_{L^2(\Omega)}\|u_{x_1}\|_{L^2(\Omega)}.$$

Notice that the function $\rho(t)=\int_{\Omega_t}|\nabla u|^2$ is nonnegative and increasing in $[0,h/2]$, therefore the last inequality implies
\begin{align*}
\frac{1}{\lambda\Lambda}\|u\|_{L^2(\Omega)}\|u_{x_1}\|_{L^2(\Omega)}&\geq \int_{0}^{h/2}\ud t\int_{\Omega_t}|\nabla u|^2\ud x\\
&=\int_{0}^{h/2}\rho(t)\ud t\\
&\geq\int_{h/4}^{h/2}\rho(t)\ud t\\
&\geq\int_{h/4}^{h/2}\rho(h/4)\ud t\\
&=\frac{h}{4}\int_{\Omega_{h/4}}|\nabla u|^2\ud x,
\end{align*}

thus we obtain
\begin{equation}
\label{2}
\frac{h}{4}\int_{\Omega_{\frac{h}{4}}}|\nabla u|^2\ud x
\leq\frac{1}{\lambda\Lambda}\|u\|_{L^2(\Omega)}\|u_{x_1}\|_{L^2(\Omega)}
\end{equation}
Next we fix any index $2\leq i\leq n,$ point $x'\in\omega$ and apply Lemma~\ref{lemma,a,epsilon} to the function $u_{x_i}$ on the segment with endpoints $(0,x')$ and $(\frac{h}{2},x')$. We have that for $\epsilon=\frac{1}{2},$

$$\int_{0}^\frac{h}{4}u_{x_i}^2(x_1,x')\ud x_1\leq 4\int_{\frac{h}{4}}^\frac{h}{2}u_{x_i}^2(x_1,x')\ud x_1+4\int_{0}^\frac{h}{2}u_{x_1x_i}^2(x_1,x')x_1^2\ud x_1,$$
which integrating over $\omega$ and summing up in $i$ we obtain
\begin{equation}
\label{2.1}
\int_{\Omega_\frac{h}{4}'}|\nabla u|^2\ud x\leq 4\int_{\Omega_\frac{h}{2}'\setminus\Omega_\frac{h}{4}'}|\nabla u|^2\ud x+
\int_{\Omega_\frac{h}{4}'}u_{x_1}^2(x)\ud x+4\int_{\Omega_\frac{h}{2}'}|\nabla u_{x_1}|^2x_1^2\ud x.
\end{equation}

It is clear that $u_{x_1}=0$ on $[0,h]\times\partial\Omega,$ thus we can apply Lemma~\ref{lemma,elliptic.gradient}
to the function $u_{x_1}$ in the domain $\Omega,$ therefore,

$$\int_{\Omega_\frac{h}{2}'}|\nabla u_{x_1}|^2x_1^2\ud x\leq \int_{\Omega}|\delta\nabla u_{x_1}|^2\ud x\leq
\Big(\frac{4\Lambda^2n}{\lambda^2}+1\Big)\int_{\Omega}|u_{x_1}|^2\ud x
+\frac{1}{\lambda^2}\int_{\Omega}|\delta^2L(u_{x_1})|^2\ud x.$$

Differentiating the equality $L(u)=0$ in $x_1$ we get $0=(L(u))_{x_1}=L(u_{x_1}),$ thus the last inequality implies,
\begin{equation}
\label{2.2}
\int_{\Omega_\frac{h}{2}'}|\nabla u_{x_1}|^2x_1^2\ud x\leq\Big(\frac{4\Lambda^2n}{\lambda^2}+1\Big)\int_{\Omega}|u_{x_1}|^2\ud x.
\end{equation}
Coupling now (\ref{2.1}) and (\ref{2.2}) we obtain,
\begin{equation}
\label{2.3}
\int_{\Omega_\frac{h}{4}'}|\nabla u|^2\ud x\leq 4\int_{\Omega_\frac{h}{2}'\setminus\Omega_\frac{h}{4}'}|\nabla u|^2\ud x+
\int_{\Omega_\frac{h}{4}'}u_{x_1}^2(x)\ud x+4\Big(\frac{4\Lambda^2n}{\lambda^2}+1\Big)\int_{\Omega}|u_{x_1}|^2\ud x.
 \end{equation}

It is clear that we can obtain a similar bound on the norm of $\nabla u$ in the right half of $\Omega,$ namely in $\Omega\setminus(\Omega_\frac{h}{4}'\cup\Omega_\frac{h}{4}).$ Taking into account this fact the proof follows now from (\ref{2}) and (\ref{2.3}).

\end{proof}

\textbf{Proof of Theorem~\ref{theorem2.ellipcit.korn}}. The proof is based on an application of Lemmas \ref{lemma,a,epsilon} and \ref{lemma1.ellipcit.korn}.
We apply Lemma~\ref{lemma,a,epsilon} to each of the functions $u_{x_i}$ on the interval $[h/2,\varphi(x')]$ with $\epsilon=\frac{h}{2\varphi(x')-h}.$
Thus we have for any $x'\in\omega$ that
\begin{align*}
\int_{h}^{\varphi(x')}|\nabla u|^2\ud x_1&=\int_{h}^{\varphi(x')}\sum_{i=1}^n|u_{x_i}|^2\ud x_1\\
&\leq \sum_{i=1}^n\Big((4m-2)\int_{\frac{h}{2}}^h|u_{x_i}|^2\ud x_1+4\int_{\frac{h}{2}}^{\varphi(x')}|(x_1-\varphi(x'))u_{x_ix_1}|^2\ud x_1\Big).
 \end{align*}

Since $\varphi(x')$ is Lipschitz, there exists a constant $C_1$ depending on $m$ and the Lipschitz constant of $\varphi(x')$ such that
$|x_1-\varphi(x')|\leq C_1\delta(x)$ uniformly in $x'\in\omega$ and $x_1\in[\frac{h}{2}, \varphi(x')].$ Therefore the last inequality implies
\begin{align*}
\int_{h}^{\varphi(x')}|\nabla u|^2\ud x_1&\leq \sum_{i=1}^n\Big(4m-2)\int_{\frac{h}{2}}^h|u_{x_i}|^2\ud x_1+4C_1^2\int_{\frac{h}{2}}^{\varphi(x')}|\delta(x)u_{x_ix_1}|^2\ud x_1\Big)\\
&=(4m-2)\int_{\frac{h}{2}}^h|\nabla u|^2\ud x_1+4C_1^2\int_{\frac{h}{2}}^{\varphi(x')}|\delta(x)\nabla u_{x_1}|^2\ud x_1,
\end{align*}
thus
\begin{align*}
\int_{0}^{\varphi(x')}|\nabla u|^2\ud x_1&\leq (4m-1)\int_{0}^h|\nabla u|^2\ud x_1+4C_1^2\int_{\frac{h}{2}}^{\varphi(x')}|\delta(x)\nabla u_{x_1}|^2\ud x_1\\
&\leq (4m-1)\int_{0}^h|\nabla u|^2\ud x_1+4C_1^2\int_{0}^{\varphi(x')}|\delta(x)\nabla u_{x_1}|^2\ud x_1.
\end{align*}
Integrating in $x'$ over $\omega$ we discover
\begin{equation}
\label{3}
\int_{\Omega}|\nabla u|^2\ud x\leq (4m-1)\int_{\Omega_h}|\nabla u|^2\ud x+4C_1^2\int_{\Omega}|\delta(x)\nabla u_{x_1}|^2\ud x,
\end{equation}

where $\Omega_h=[0,h]\times\omega.$
In the next step we apply Lemma~\ref{lemma,elliptic.gradient} to the function $u_{x_1}$ in the domain $\Omega$.
It is clear that $u_{x_1}=0$ on $\{x\in\partial\Omega\ : \ x'\in\partial\omega\}$ and that $L(u_{x_1})=L(u)_{x_1}=0$ in $\Omega,$ thus

\begin{align*}
\int_{\Omega}|\delta(x)\nabla u_{x_1}|^2\ud x&\leq \Big(\frac{4n\Lambda^2}{\lambda^2}+1\Big)\int_{\Omega}|u_{x_1}|^2\ud x+\frac{1}{\lambda^2}\int_{\Omega}|\delta(x)^2L(u_{x_1})|^2\\
&=\Big(\frac{4n\Lambda^2}{\lambda^2}+1\Big)\int_{\Omega}|u_{x_1}|^2\ud x.
\end{align*}
Now (\ref{3}) and the last inequality together imply,
$$\int_{\Omega}|\nabla u|^2\ud x\leq (4m-1)\int_{\Omega_h}|\nabla u|^2\ud x+4C_1^2\Big(\frac{4n\Lambda^2}{\lambda^2}+1\Big)\int_{\Omega}|u_{x_1}|^2\ud x.$$

The last inequality coupled with Lemma~\ref{lemma1.ellipcit.korn} applied to the function $u$ in the domain $\Omega_h$ completes the proof.\\

\textbf{Proof of Theorem~\ref{theorem3.ellipcit.hyperplane}}. We make a change of variables $y_1=x_1-a_1-\sum_{i=2}^na_ix_i,$ $y_i=x_i,$ $i\geq 2$ and consider the function $v(y)=u(x)$ to reduce the problem to the case when the left boundary of $\Omega$ is a subspace of the coordinate hyperplane $x_1=0,$ i.e., when Theorem~\ref{theorem2.ellipcit.korn} is applicable. It is easy to check that then the new function $v$ will be defined in the domain $\Omega'=\{y\in\mathbb R^n \ : \ y'\in\omega, \ 0< y_1< \phi(y')\},$ where
$\phi(y')=\varphi(x')-a_1-\sum_{i=2}^na_ix_i.$ Moreover, function $v$ will satisfy the identity
\begin{equation}
\label{L'.elliptic}
L'(v)=(b_1+\sum_{i=2}^nb_ia_i^2)v_{11}-2\sum_{i=2}^nb_ia_iv_{1i}+\sum_{i=2}^nb_iv_{ii}=0.
\end{equation}
Let us prove the following assertion.\\
\textbf{Claim.} The operator $L'$ satisfies conditions (\ref{ellipticity}) and (\ref{bounded.coeff}) with $\Lambda'=\Lambda\left(1+(n-1)(A+A^2)\right)$ and
 $\lambda'=\frac{\lambda_{A}}{n-1}$\\
\textbf{Proof of claim.} Indeed, denoting $L'(v)=\sum_{i,j=1}^n a_{ij}'v_{ij}$ we have for the coefficients
$$a_{11}'=b_1+\sum_{i=2}^nb_ia_i^2,\quad a_{1j}'=a_{j1}'=-b_1a_j,\quad\text{and}\quad a_{jj}'=b_j\quad\text{for}\quad j=2,\dots,n.$$
All coefficients $a_{ij}'$ that do not appear in the above formulas, vanish.
For any $x\in\mathbb R^n$ and any fixed $2\leq j\leq n$ we can estimate like in the proof of Lemma~\ref{lemma,elliptic.gradient2},
\begin{align*}
\sum_{i,j=1}^n a_{ij}'x_ix_j&=b_1x_1^2+\sum_{i=2}^nb_i(a_ix_1-x_i)^2\\
&\geq \lambda x_1^2+\lambda(a_jx_1-x_j)^2\\
&=\lambda\left((1+a_j^2)x_1^2-2a_jx_1x_j+x_j^2\right)\\
&\geq \lambda_{a_j}(x_1^2+x_j^2)\\
&\geq \lambda_{A}(x_1^2+x_j^2),
\end{align*}
thus summing up in $j$ we discover
\begin{align*}
(n-1)\sum_{i,j=1}^n a_{ij}'x_ix_j&\geq \lambda_{A}((n-1)x_1^2+\sum_{j=2}^nx_j^2)\\
&\geq \lambda_{A}|x|^2.
\end{align*}
Nest we derive an upper bound on the sum $\sum_{i=1}^n |a_{ij}'|$ for each $j$. For $j=1$ we have,
\begin{align*}
\sum_{i=1}^n |a_{i1}'|&=b_1+\sum_{i=2}^n(b_ia_i^2+b_i|a_i|)\\
&\leq \Lambda\left(1+(n-1)(A+A^2)\right),
\end{align*}
and for any $2\leq j\leq n$ we have as well,
\begin{align*}
\sum_{i=1}^n |a_{ij}'|&=b_i+b_i|a_i|\\
&\leq \Lambda(1+A),
\end{align*}
as claimed.\\
It is easily seen from the definition of the function $\phi,$ that it is a Lipschitz function with a Lipschitz constant $L'=L+(n-1)A.$
The Jacobian of the change of variables is exactly $1,$ thus it remains to apply Theorem~\ref{theorem2.ellipcit.korn} to the pair $(v,\Omega').$

\section{Korn and Korn-like inequalities in the space dimension two}

\subsection{Fixed boundary conditions}
\label{section.fixed.boundary.conditions}
We start with a definition.
\begin{Definition}
\label{def:l-periodicity}
Let $l,h>0$ and $\Omega=(0,h)\times(0,l).$ The function $u\in C^3(\bar\Omega,\mathbb R)(W^{1,2}(\bar\Omega,\mathbb R))$ is said to be $l$-periodic in $y,$ if there exists a function $v\in C^3([0,h]\times\mathbb R,\mathbb R)(W^{1,2}([0,h]\times\mathbb R,\mathbb R))$ that is $l$-periodic in $y$ and $u(x,y)=v(x,y)$ for all $(x,y)\in \bar\Omega,$ i.e., if it can be extended to an $l$-periodic function in $y$ to the whole $\mathbb R$ preserving the maximal regularity.
\end{Definition}

\begin{Lemma}
\label{lemma1.elliptic.dim2}
 Let $l,h>0$ and $\Omega=(0,h)\times(0,l).$ Assume $a(y)\in C^1[0,l]$ and consider the elliptic operator $L_a(u)=(1+a^2(y))u_{xx}-2a(y)u_{xy}+u_{yy}-a'(y)u_x$ like in Lemma~\ref{lemma,elliptic.gradient2}. Then there exists a constant $C$ depending on $M=\|a\|_{L^\infty(\Omega)}$ and $M_1=\|a'\|_{L^\infty(\Omega)}$ such that if a $u\in C^3(\bar\Omega)$ solution of $L_a(u)=0$ in $\Omega$ satisfies one of the conditions
 \begin{itemize}
 \item[(i)] $u(x,0)=u(x,l)=0$ for all $x\in [0,h],$
 \item[(ii)] The function $u(x,y)$ is $l$-periodic in $y$ and $a(0)=a(l),$
\end{itemize}

 then there holds
$$\|\nabla u\|_{L^2(\Omega)}^2\leq \frac{C(M,M_1)}{h}\|u\|_{L^2(\Omega)}\|u_{x}\|_{L^2(\Omega)}+C(M,M_1)\|u_{x}\|_{L^2(\Omega)}^2.$$
\end{Lemma}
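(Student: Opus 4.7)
The plan is to mirror the three-step strategy of Lemma~\ref{lemma1.ellipcit.korn}, adapted to the variable-coefficient operator $L_a$ and to either of the two boundary/periodicity hypotheses. The ingredients are: (a) a middle-strip energy estimate from integration by parts on the shrinking rectangles $\Omega_t=(h/2-t,h/2+t)\times(0,l)$; (b) a Hardy-type extension (Lemma~\ref{lemma,a,epsilon}) from the middle to the full thickness; and (c) an application of Lemma~\ref{lemma,elliptic.gradient2} to $u_x$ (rather than $u$) to absorb the resulting weighted second-derivative terms.

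For (a) I would write $L_a(u)=\mathrm{div}(M\nabla u)$ with $M=\bigl(\begin{smallmatrix}1+a^2 & -a\\ -a & 1\end{smallmatrix}\bigr)$, so that the divergence theorem on $\Omega_t$ reduces $\int_{\Omega_t}\nabla u\cdot M\nabla u$ to a boundary integral. The contributions on $y=0,l$ cancel by $u=0$ in case (i), and by $l$-periodicity of $u,u_x,u_y$ together with $a(0)=a(l)$ in case (ii), leaving the cross-sections $\int_0^l u\bigl[(1+a^2)u_x-a u_y\bigr]\,dy$ at $x=h/2\pm t$. I then integrate $-\int_0^l a u u_y\,dy$ by parts in $y$ (using the same boundary cancellation) to rewrite it as $\tfrac12\int_0^l a'u^2\,dy$. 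The crucial observation is that the identity $[u^2]_{h/2-t}^{h/2+t}=2\int_{h/2-t}^{h/2+t} u u_x\,dx$ turns the difference of these boundary $u^2$ terms into a volume integral $\int_{\Omega_t} a' u u_x$, controlled by $M_1\|u\|\|u_x\|$. Combining with the ellipticity $\nabla u\cdot M\nabla u\geq \lambda|\nabla u|^2$, integrating in $t\in[0,h/2]$, and using monotonicity of $t\mapsto\int_{\Omega_t}|\nabla u|^2$ as in Lemma~\ref{lemma1.ellipcit.korn} gives
$$\int_{\Omega_{h/4}}|\nabla u|^2\leq \frac{C(M,M_1)}{h}\|u\|_{L^2}\|u_x\|_{L^2}.$$

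For (b) I apply Lemma~\ref{lemma,a,epsilon} with $\epsilon=1/2$ (in the reversed form obtained via $g(t)=f(b-t)$, exactly as in the proof of Lemma~\ref{lemma1.ellipcit.korn}) to $u_x(\cdot,y)$ and $u_y(\cdot,y)$ on $[0,h/2]$ and on $[h/2,h]$; the weight $(b-t)^2$ becomes $\delta(x,y)^2=\min(x,h-x)^2$, yielding
$$\int_\Omega|\nabla u|^2\leq C\int_{\Omega_{h/4}}|\nabla u|^2 + C\int_\Omega \delta^2|\nabla u_x|^2.$$
For (c), since the coefficients of $L_a$ depend only on $y$, $L_a$ commutes with $\partial_x$, so $L_a(u_x)=0$. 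In case (i), $u_x=0$ on $y=0,l$ and Remark~\ref{Zero.imply.zero.bd.integral} supplies hypothesis (\ref{Zero.boundary.integral}); in case (ii), the integrand in (\ref{Zero.boundary.integral}) vanishes on $x=0,h$ (where $\delta=0$), while the contributions at $y=0$ and $y=l$ cancel by $l$-periodicity of $u_x,u_{xx},u_{xy}$ together with $a(0)=a(l)$. Lemma~\ref{lemma,elliptic.gradient2} then yields $\int_\Omega\delta^2|\nabla u_x|^2\leq C(M,M_1)\|(1+\delta)u_x\|^2\leq C(M,M_1)\|u_x\|^2$ after absorbing the bounded factor $(1+\delta)^2$, and combining with (a) and (b) finishes the proof.

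The main obstacle is the middle-strip estimate in (a): a naive treatment of the cross-sectional piece $-\int a u u_y\,dy$ produces either $\|u\|\|u_y\|$ or $\|u\|^2$ on the right-hand side, neither admissible in the target inequality. The two integration-by-parts trick described above is what rescues the estimate, producing exactly $\|u\|\|u_x\|$ with the correct $1/h$ scaling. A secondary subtlety is verifying hypothesis (\ref{Zero.boundary.integral}) in the periodic case, where the assumption $a(0)=a(l)$ enters crucially.
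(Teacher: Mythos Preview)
Your proposal is correct and follows essentially the same three-step architecture as the paper: a middle-strip energy identity on $\Omega_t$, extension to the full rectangle via Lemma~\ref{lemma,a,epsilon}, and an application of Lemma~\ref{lemma,elliptic.gradient2} to $u_x$ (using $L_a(u_x)=(L_a u)_x=0$ and verifying condition~(\ref{Zero.boundary.integral}) separately under (i) and (ii)). The paper's proof records the same final middle-strip bound $\int_{\Omega_{h/4}}|\nabla u|^2\le\bigl(2M_1+\tfrac{4(1+M^2)}{h}\bigr)\|u\|\,\|u_x\|$ and then defers to the argument of Lemma~\ref{lemma1.ellipcit.korn} for steps (b)--(c).

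The one place where your write-up is actually more careful than the paper's is the treatment of the vertical boundary contribution $-\int_0^l a\,u u_y\,dy$ at $x=h/2\pm t$: the paper's displayed integration-by-parts identity lists only the $(1+a^2)uu_x$ cross-section terms together with the volume term $-\int_{\Omega_t}a'uu_x$, while you explicitly convert the $-auu_y$ piece via $uu_y=\tfrac12(u^2)_y$ and the fundamental theorem in $x$ into the same volume form $\int_{\Omega_t}a'uu_x$. Either way one lands on the same right-hand side $C(M,M_1)\bigl(\tfrac{1}{h}+1\bigr)\|u\|\,\|u_x\|$, so the approaches coincide.
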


\begin{proof}

First we derive an estimate like (\ref{2}) for the operator $L_a.$  Following the notation in the proof of Lemma~\ref{lemma1.ellipcit.korn} we have by integration by parts,
\begin{align*}
\lambda_a\int_{\Omega_t}|\nabla u|^2&\leq \int_{\Omega_t}(1+a^2(y))u_x^2-2a(y)u_xu_y+u_y^2\\
&=-\int_{\Omega_t}uL_a(u)-\int_{\Omega_t}a'(y)uu_x\\
&+\int_{\{h/2+t\}\times[0,l]}(1+a^2(y))uu_x\ud y-\int_{\{h/2-t\}\times[0,l]}(1+a^2(y))uu_x\ud y,
\end{align*}
as it is easy to see that under either of the conditions $(i)$ or $(ii)$ the boundary integral on the horizontal boundary of $\Omega_t$ vanishes.
Integrating now the inequality over $[0,h/2]$ and applying the Schwartz inequality we obtain
\begin{align*}
\lambda_a\int_0^{\frac{h}{2}}\int_{\Omega_t}&|\nabla u|^2\ud t\leq \int_0^{\frac{h}{2}}\left(\int_{\Omega_t}|a'(y)uu_x|\right)\ud t\\
&+\int_0^{\frac{h}{2}}\left(\int_{\{h/2+t\}\times[0,l]}(1+a^2(y))|uu_x|\ud y+\int_{\{h/2-t\}\times[0,l]}(1+a^2(y))|uu_x|\ud y\right)\ud t\\
&\leq M_1\int_0^{\frac{h}{2}}\left(\int_{\Omega}|uu_x|\right)\ud t+(1+M^2)\int_{\Omega}|uu_x|\\
&\leq \left(\frac{hM_1}{2}+1+M^2\right)\|u\|_{L^2(\Omega)}\|u_x\|_{L^2(\Omega)},
\end{align*}
thus we get like in the proof of Lemma~\ref{lemma1.ellipcit.korn}, that
$$\int_{\Omega_{h/4}}|\nabla u|^2\leq \left(2M_1+\frac{4(1+M^2)}{h}\right)\|u\|_{L^2(\Omega)}\|u_x\|_{L^2(\Omega)},$$
as wished. The rest of the proof is slightly different from the proof of Lemma~\ref{lemma1.ellipcit.korn}. We will apply Lemma~\ref{lemma,elliptic.gradient2} instead of Lemma~\ref{lemma,elliptic.gradient}. The only thing we have to check is that Lemma~\ref{lemma,elliptic.gradient2} is indeed applicable to the function $u_x$ under each of the conditions $(i)$ and $(ii).$ If the condition $(i)$ is satisfied then we have that $u_x(x,0)=u_x(x,l)=0$ for all $x\in(0,h),$ thus condition (\ref{Zero.boundary.integral}) is satisfied. We have furthermore
 $L_a(u_x)=(L_a(u))_x=0$ and therefore Lemma~\ref{lemma,elliptic.gradient2} applies. If now condition $(ii)$ is satisfied then we again have $L_a(u_x)=(L_a(u))_x=0$ and the integrand in (\ref{Zero.boundary.integral}) for $u_x,$ instead of $u,$ takes the same values at points $(x,0)$ and $(x,l)$ and thus the integral in
(\ref{Zero.boundary.integral}) vanishes, hance Lemma~\ref{lemma,elliptic.gradient2} applies, so we achieve the proof.
\end{proof}

Next we prove the following $2D$ version of Theorem~\ref{theorem2.ellipcit.korn}, where the operator $L$ is replaced by $L_a.$
\begin{Lemma}
\label{lemma2.elliptic.dim2}
 Assume $l>0.$ Let $a(y)\in C^1[0,l]$ and let $\varphi\colon[0,l]\to\mathbb R$ be Lipschitz with $H=\sup_{y\in[0,l]}\varphi(y)$ and $h=\inf_{y\in[0,l]}\varphi(y)>0.$ Denote $\Omega=\{(x,y)\in\mathbb R^2 \ : \ y\in[0,l], \ 0<x<\varphi(y)\}$ and
  consider the elliptic operator $L_a(u)=(1+a^2(y))u_{xx}-2a(y)u_{xy}+u_{yy}-a'(y)u_x$ like in Lemma~\ref{lemma,elliptic.gradient2}. Then there exists a constant $C$ depending on $m=H/h,$ $M=\|a\|_{L^\infty(\Omega)}$ and $M_1=\|a'\|_{L^\infty(\Omega)}$ and $L=\mathrm{Lip(\varphi)},$ such that any $u\in C^3(\bar\Omega)$ solution of $L_a(u)=0$ in $\Omega$ satisfying the boundary conditions $u(x)=0$ on the portion $\Gamma=\{(x,y)\in\partial\Omega \ : \ y=0\ \text{or}\ y=l\}$ of the boundary of $\Omega$
fulfills the inequality
$$\|\nabla u\|_{L^2(\Omega)}^2\leq \frac{C(m,M,M_1,L)}{h}\|u\|_{L^2(\Omega)}\|u_{x}\|_{L^2(\Omega)}+C(m,M,M_1,L)\|u_{x}\|_{L^2(\Omega)}^2.$$
\end{Lemma}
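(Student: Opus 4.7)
The plan is to parallel the proof of Theorem~\ref{theorem2.ellipcit.korn}, replacing its two key ingredients by their two-dimensional variable-coefficient analogues: Lemma~\ref{lemma,elliptic.gradient} is replaced by Lemma~\ref{lemma,elliptic.gradient2}, and Lemma~\ref{lemma1.ellipcit.korn} by Lemma~\ref{lemma1.elliptic.dim2}. The one-variable Hardy-type estimate Lemma~\ref{lemma,a,epsilon} is used as-is.

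First, for each fixed $y\in[0,l]$ I would apply Lemma~\ref{lemma,a,epsilon} to $u_x(\cdot,y)$ and $u_y(\cdot,y)$ on the interval $[h/2,\varphi(y)]$ with $\epsilon=h/(2\varphi(y)-h)\in[1/(2m-1),1]$, and sum the two resulting inequalities. The Lipschitz property of $\varphi$ gives $|x-\varphi(y)|\le C_1(m,L)\,\delta(x,y)$, where $\delta$ denotes the distance to the right face $\{x=\varphi(y)\}$. Integrating over $y\in[0,l]$ and setting $\Omega_h:=(0,h)\times(0,l)$ yields
$$\int_\Omega |\nabla u|^2 \leq (4m-1)\int_{\Omega_h} |\nabla u|^2 + 4C_1^2\int_\Omega \delta^2|\nabla u_x|^2.$$

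Next I would apply Lemma~\ref{lemma,elliptic.gradient2} to $f=u_x$ on $\Omega$, choosing $\Gamma_2=\{y=0\}\cup\{y=l\}$. Because $u$ vanishes identically on these horizontal segments, its tangential derivative $u_x$ also vanishes there, so Remark~\ref{Zero.imply.zero.bd.integral} verifies condition (\ref{Zero.boundary.integral}). Since the coefficients of $L_a$ depend only on $y$, differentiation in $x$ commutes: $L_a(u_x)=(L_a(u))_x=0$. Lemma~\ref{lemma,elliptic.gradient2} then gives $\int_\Omega \delta^2|\nabla u_x|^2\le C(M,M_1)\|(1+\delta)u_x\|_{L^2(\Omega)}^2 \le C(m,M,M_1,L)\|u_x\|_{L^2(\Omega)}^2$, where the last step exploits $\delta\le H\le mh$ under the thin-domain normalisation so that the $(1+H)^2$ factor folds into a constant of the admissible form.

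Finally I would apply Lemma~\ref{lemma1.elliptic.dim2}, under condition (i), to the restriction $u|_{\Omega_h}$: it still satisfies $L_a(u)=0$ in $\Omega_h$ and vanishes on $y=0$ and $y=l$, so
$$\int_{\Omega_h}|\nabla u|^2 \leq \frac{C(M,M_1)}{h}\|u\|_{L^2(\Omega_h)}\|u_x\|_{L^2(\Omega_h)} + C(M,M_1)\|u_x\|_{L^2(\Omega_h)}^2.$$
Chaining the three displayed inequalities produces the claimed bound. The main obstacle I anticipate is the extra $(1+\delta)$ factor in Lemma~\ref{lemma,elliptic.gradient2}, which has no counterpart in the constant-coefficient Lemma~\ref{lemma,elliptic.gradient}; resolving it requires using that $\delta$ is pointwise bounded by $H$ and that in the thin regime $H$ is controlled via $m$, so the factor does not spoil the dependence of the final constant on $m,M,M_1,L$ alone.
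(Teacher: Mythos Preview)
Your proposal is correct and follows essentially the same approach as the paper: the paper's proof is a one-line statement that the argument is identical to the proof of Theorem~\ref{theorem2.ellipcit.korn}, with Lemmas~\ref{lemma,elliptic.gradient} and~\ref{lemma1.ellipcit.korn} replaced by their variable-coefficient analogues Lemmas~\ref{lemma,elliptic.gradient2} and~\ref{lemma1.elliptic.dim2}, which is precisely what you carry out. The subtlety you flag regarding the extra $(1+\delta)$ factor from Lemma~\ref{lemma,elliptic.gradient2} is one the paper passes over silently.
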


\begin{proof}
The proof relies on Lemmas \ref{lemma,elliptic.gradient2}, \ref{lemma,a,epsilon} and \ref{lemma1.elliptic.dim2}. It is actually identical to the proof of Theorem~\ref{theorem2.ellipcit.korn}, that was based on Lemmas $\ref{lemma,elliptic.gradient},$ $\ref{lemma,a,epsilon}$ and $\ref{lemma1.ellipcit.korn},$ therefore we will not repeat it.
\end{proof}

\textbf{Proof of Theorem~\ref{theorem1.second.korn.dim2}.} By a density argument one can without loss of generality assume that $U$ is of class $C^\infty$ up to the boundary of $\Omega.$ Moreover, one can without loss of generality assume that the displacement $U$ is harmonic in $\Omega$, see [\ref{bib.Oleinik.shamaev.Yosifian}, page 18] for a proof. We make a change of variables $x_1=x-\varphi_1(y)$ and $y_1=y,$ which transforms the domain $\Omega$ into $\Omega^1$ in the coordinate plane $OX_1Y_1.$ Consider the new displacement $U^1=(u^1,v^1),$ where $u^1(x_1,y_1)=u(x,y)$ and $v^1(x_1,y_1)=v(x,y).$ It is easy to verify that
\begin{equation}
\label{L(u1)=0}
L_{\varphi_1'}(u^1)=\triangle u=0 \qquad \text{in}\qquad \Omega^1.
\end{equation}
We have furthermore that
$$
\nabla U=
\begin{bmatrix}
u_{x_1}^1 & -\varphi_1'(y)u_{x_1}^1+u_{y_1}^1\\
v_{x_1}^1 & -\varphi_1'(y)v_{x_1}^1+v_{y_1}^1
\end{bmatrix},
$$
thus we have to estimate $\|-\varphi_1'(y)u_{x_1}^1+u_{y_1}^1\|_{L^2(\Omega^1)}$ in terms of $\|e(U)\|_{L^2(\Omega)}$ and $\|u^1\|_{L^2(\Omega^1)}$ as the Jacobian of the change of variables is exactly $1.$ By (\ref{L(u1)=0}) and the fact that $\varphi_2-\varphi_1$ determines the right boundary of $\Omega_1$ and has a Lipshitz constant less than $\rho_1+\rho_2$ we can owe to Lemma~\ref{lemma2.elliptic.dim2} and the triangle inequality to establish
\begin{align*}
\|-\varphi_1'&(y)u_{x_1}^1+u_{y_1}^1\|_{L^2(\Omega^1)}^2\leq 2(\|u_{y_1}^1\|_{L^2(\Omega^1)}^2+\|\varphi_1'(y)u_{x_1}^1\|_{L^2(\Omega^1)}^2)\\
&\leq \frac{C(m,\rho_1,\rho_1',\rho_2)}{h}\|u^1\|_{L^2(\Omega^1)}\|u_{x_1}^1\|_{L^2(\Omega^1)}+C(m,\rho_1,\rho_1',\rho_2)\|u_{x_1}^1\|_{L^2(\Omega^1)}^2+\rho_1^2\|u_{x_1}^1\|_{L^2(\Omega^1)}^2)\\
&\leq \frac{C(m,\rho_1,\rho_1',\rho_2)+\rho_1^2}{h}\|u^1\|_{L^2(\Omega^1)}\|u_{x_1}^1\|_{L^2(\Omega^1)}+(C(m,\rho_1,\rho_1',\rho_2)+\rho_1^2)\|u_{x_1}^1\|_{L^2(\Omega^1)}^2\\
&= \frac{C(m,\rho_1,\rho_1',\rho_2)+\rho_1^2}{h}\|u\|_{L^2(\Omega)}\|u_{x}\|_{L^2(\Omega)}+(C(m,\rho_1,\rho_1',\rho_2)+\rho_1^2)\|u_{x}\|_{L^2(\Omega)}^2\\
&\leq\frac{C(m,\rho_1,\rho_1',\rho_2)+\rho_1^2}{h}\|u\|_{L^2(\Omega)}\|e(U)\|_{L^2(\Omega)}+(C(m,\rho_1,\rho_1',\rho_2)+\rho_1^2)\|e(U)\|_{L^2(\Omega)}^2.
\end{align*}
The other component $v_{x_1}^1$ of the gradient is estimated by $-\varphi_1'(y)u_{x_1}^1+u_{y_1}^1$ and $e(U)$ via triangle inequality.

\subsection{Periodic boundary conditions}
\label{section.periodic.boundary.conditions}

\begin{Lemma}
\label{lemma.periodic.1.dim2}
Let $l,$ $a(y),$ $M,$ $M_1$, $\varphi,$ $h,$ $H,$ $L$ and $\Omega$ be as in Lemma~\ref{lemma2.elliptic.dim2}. Assume furthermore that $a(0)=a(l).$ Then there exists a constant $C$ depending on $m=H/h,$ $M,$ $M_1$ and $L$ such that if a $u\in C^3(\bar\Omega)$ solution of $L_a(u)=0$ is $l$-periodic in $y,$ then
$$\|u_y\|_{L^2(\Omega)}^2\leq \frac{C(m,M,M_1,L)}{h}\|u\|_{L^2(\Omega)}\|u_x\|_{L^2(\Omega)}+C(m,M,M_1,L)\|u_x\|_{L^2(\Omega)}^2.$$
\end{Lemma}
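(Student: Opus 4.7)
The plan is to mirror the proof of Lemma~\ref{lemma2.elliptic.dim2} (which follows the template of Theorem~\ref{theorem2.ellipcit.korn}), performing exactly the substitution one performs in going from condition (i) to condition (ii) of Lemma~\ref{lemma1.elliptic.dim2}: replace ``zero trace on $\{y=0,l\}$'' by ``$l$-periodicity in $y$''. Only the component $u_y$ of the gradient needs to be controlled on the curved region $\Omega\setminus\Omega_h$, since $\|u_x\|_{L^2(\Omega)}$ already appears on the right-hand side.

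First, for every fixed $y\in(0,l)$ I apply Lemma~\ref{lemma,a,epsilon} to $u_y(\cdot,y)$ on the interval $[h/2,\varphi(y)]$ with $\epsilon=h/(2\varphi(y)-h)$. Integrating in $y$ and using the Lipschitz character of $\varphi$ together with $\varphi\ge h$ to replace $|\varphi(y)-x|$ by $C_1\delta(x,y)$ (with $\delta$ the distance function to a suitable portion $\Gamma_1$ of $\partial\Omega$ containing the right curve $\{x=\varphi(y)\}$), this produces a bound of the form
$$\int_{\Omega} u_y^2\;\le\; (4m-1)\int_{\Omega_h}u_y^2 \;+\; 4C_1^2\int_{\Omega}\delta^{2}u_{xy}^2,\qquad \Omega_h=(0,h)\times(0,l),$$
with $C_1=C_1(m,L)$.

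Second, I estimate $\int_\Omega\delta^2 u_{xy}^2\le\int_\Omega\delta^2|\nabla u_x|^2$ via Lemma~\ref{lemma,elliptic.gradient2} applied to $f=u_x$. The identity $L_a(u_x)=(L_a u)_x=0$ (valid because the coefficients of $L_a$ depend only on $y$) kills the $L_a$-contribution, so the lemma yields $\int_\Omega\delta^2|\nabla u_x|^2\le C(M,M_1)\|(1+\delta)u_x\|_{L^2(\Omega)}^2$. The key verification is the boundary integral condition (\ref{Zero.boundary.integral}) for $f=u_x$: on the right curve it holds because $\delta=0$; taking $\Gamma_1$ to also contain the vertical segment $\{x=0\}$ forces $\delta=0$ there as well; on the horizontal pieces $\{y=0\}$ and $\{y=l\}$ the $l$-periodicity of $u$ (and hence of $u_x,u_{xx},u_{xy}$) together with $a(0)=a(l)$ makes the two integrands negatives of each other at matching $x$, giving exact cancellation in the sense already used for the rectangular case in the proof of Lemma~\ref{lemma1.elliptic.dim2}(ii).

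Third, I control $\int_{\Omega_h}u_y^2\le\int_{\Omega_h}|\nabla u|^2$ by Lemma~\ref{lemma1.elliptic.dim2} under condition (ii), whose hypotheses ($l$-periodicity in $y$ and $a(0)=a(l)$) are exactly the present ones restricted to the rectangle $\Omega_h$. Combining the three estimates and absorbing the $\|u_x\|_{L^2(\Omega)}^2$ terms into the right-hand side produces the claim. The single nontrivial step will be the boundary-integral check in the second paragraph: unlike the fixed-boundary case of Lemma~\ref{lemma2.elliptic.dim2}, where $u_x\equiv 0$ on $\{y=0,l\}$ made (\ref{Zero.boundary.integral}) trivial, here the vanishing is by periodic cancellation, so $\delta$ must take equal values at $(x,0)$ and $(x,l)$; this is automatic on $\Omega_h$ and on the straight left segment, and is consistent with the implicit periodic interpretation of the curved right boundary required for the periodic extension in Definition~\ref{def:l-periodicity} to make sense on $\Omega$.
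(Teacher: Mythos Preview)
Your proposal is correct and follows exactly the route the paper takes: the paper's proof of this lemma simply says it is ``analogous to the proof of Lemma~\ref{lemma2.elliptic.dim2} and relies again on Lemmas~\ref{lemma,elliptic.gradient2}, \ref{lemma,a,epsilon} and \ref{lemma1.elliptic.dim2},'' which is precisely the three-step structure you spell out. Your extra care in verifying the boundary integral condition~(\ref{Zero.boundary.integral}) by periodic cancellation (and in noting that this requires $\delta(x,0)=\delta(x,l)$, implicitly forcing $\varphi(0)=\varphi(l)$ as indeed holds in the paper's applications) goes beyond what the paper writes down but is fully in line with its intended argument.
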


\begin{proof}
 The proof is analogues to the proof of Lemma~\ref{lemma2.elliptic.dim2} and relies again on Lemmas \ref{lemma,elliptic.gradient2}, \ref{lemma,a,epsilon} and \ref{lemma1.elliptic.dim2}.
\end{proof}

\begin{Lemma}
\label{lemma.periodic.2.dim2}
Let $l,$ $\varphi_1,$ $\varphi_2,$ $h,$ $H$ $\rho_1$, $\rho_2,$ $\rho_1'$ and $\Omega$ be as in Theorem~\ref{theorem1.second.korn.dim2}. Assume furthermore that $\varphi_1(0)=\varphi_1(l),$ $\varphi_1'(0)=\varphi_1'(l)$ and $\varphi_2(0)=\varphi_2(l).$ Then there exists a constant $C$ depending on $m=H/h,$ $\rho_1,$ $\rho_2$ and $\rho_1'$ such that if the function $u\in C^3(\bar\Omega)$ is $l$-periodic in $y$ and is harmonic in $\Omega,$ then
$$\|u_y\|_{L^2(\Omega)}^2\leq \frac{C(m,\rho_1,\rho_2,\rho_1')}{h}\|u\|_{L^2(\Omega)}\|u_x\|_{L^2(\Omega)}+C(m,\rho_1,\rho_2,\rho_1')\|u_x\|_{L^2(\Omega)}^2.$$
\end{Lemma}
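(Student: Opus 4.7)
The plan is to mirror the proof of Theorem~\ref{theorem1.second.korn.dim2}, but invoke Lemma~\ref{lemma.periodic.1.dim2} in place of Lemma~\ref{lemma2.elliptic.dim2}, since $u$ is now $l$-periodic in $y$ rather than vanishing on the horizontal portion of $\partial\Omega$.

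First I would perform the change of variables $x_1=x-\varphi_1(y),$ $y_1=y,$ that flattens the left boundary of $\Omega$ onto the vertical axis. Setting $u^1(x_1,y_1)=u(x,y),$ the new function is defined on
$$\Omega^1=\{(x_1,y_1)\in\mathbb R^2 \ : \ y_1\in(0,l), \ 0<x_1<\varphi_2(y_1)-\varphi_1(y_1)\},$$
and a direct calculation (as in the proof of Theorem~\ref{theorem1.second.korn.dim2}) shows that $u^1$ satisfies $L_{\varphi_1'}(u^1)=\triangle u=0$ in $\Omega^1.$ Moreover, the Jacobian of the change of variables is $1,$ so $L^2$ norms are preserved, and the chain rule gives $u_x=u^1_{x_1}$ and $u_y=u^1_{y_1}-\varphi_1'(y)u^1_{x_1}.$

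Next I would verify the hypotheses of Lemma~\ref{lemma.periodic.1.dim2} for the function $u^1$ in $\Omega^1$ with coefficient $a(y_1)=\varphi_1'(y_1).$ The $l$-periodicity of $u$ in $y$ translates directly into $l$-periodicity of $u^1$ in $y_1,$ since $\varphi_1(0)=\varphi_1(l)$ means that the substitution respects the periodic extension. The hypothesis $\varphi_1'(0)=\varphi_1'(l)$ of the lemma furnishes exactly the condition $a(0)=a(l)$ required. The thickness parameters $h,$ $H$ and ratio $m$ are preserved in $\Omega^1,$ and the right boundary $x_1=\varphi_2(y_1)-\varphi_1(y_1)$ is Lipschitz with constant at most $\rho_1+\rho_2,$ while $M=\rho_1$ and $M_1=\rho_1'.$ Thus Lemma~\ref{lemma.periodic.1.dim2} yields
$$\|u^1_{y_1}\|_{L^2(\Omega^1)}^2\leq \frac{C(m,\rho_1,\rho_2,\rho_1')}{h}\|u^1\|_{L^2(\Omega^1)}\|u^1_{x_1}\|_{L^2(\Omega^1)}+C(m,\rho_1,\rho_2,\rho_1')\|u^1_{x_1}\|_{L^2(\Omega^1)}^2.$$

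Finally I would transfer this estimate back to $u$ on $\Omega$ by the triangle inequality. Using $u_y=u^1_{y_1}-\varphi_1'(y)u^1_{x_1}$ and the bound $|\varphi_1'|\leq\rho_1,$
$$\|u_y\|_{L^2(\Omega)}^2\leq 2\|u^1_{y_1}\|_{L^2(\Omega^1)}^2+2\rho_1^2\|u^1_{x_1}\|_{L^2(\Omega^1)}^2,$$
and the norm equalities $\|u^1\|_{L^2(\Omega^1)}=\|u\|_{L^2(\Omega)}$ and $\|u^1_{x_1}\|_{L^2(\Omega^1)}=\|u_x\|_{L^2(\Omega)}$ then give the desired inequality. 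The only nontrivial verification is that the periodicity hypotheses on $\varphi_1$ and $\varphi_1'$ are precisely the ones needed to promote $u^1$ and its derivative $u^1_x$ to genuine $l$-periodic-in-$y_1$ functions so that Lemma~\ref{lemma.periodic.1.dim2} (and through it the zero-boundary-integral condition in Lemma~\ref{lemma,elliptic.gradient2}) applies; this is the main obstacle to watch, and it is exactly what the added assumptions $\varphi_1(0)=\varphi_1(l),$ $\varphi_1'(0)=\varphi_1'(l),$ $\varphi_2(0)=\varphi_2(l)$ supply.
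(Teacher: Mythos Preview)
Your proposal is correct and follows exactly the approach the paper sketches: the change of variables $x_1=x-\varphi_1(y)$, $y_1=y$ from the proof of Theorem~\ref{theorem1.second.korn.dim2}, reducing the Laplacian to $L_{\varphi_1'}$, and then invoking Lemma~\ref{lemma.periodic.1.dim2} (the periodic analogue of Lemma~\ref{lemma2.elliptic.dim2}) before transferring back via the triangle inequality. Your explicit verification that the periodicity hypotheses on $\varphi_1$, $\varphi_1'$, and $\varphi_2$ are precisely what is needed for $u^1$ to be $l$-periodic and for $a(0)=a(l)$ is a welcome addition, since the paper's own proof only gestures at this.
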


\begin{proof}
The proof can be carried out as done in the proof of Theorem~\ref{theorem1.second.korn.dim2}, where $\varphi_1'(y)$ plays the role of $a(y).$ It is based on Lemma~\ref{lemma.periodic.1.dim2} and the change of variable argument in the proof of Theorem~\ref{theorem1.second.korn.dim2}. Notice that in the proof of Theorem~\ref{theorem1.second.korn.dim2} we actually estimated the norm of the second derivative $u_y$ of $u$ in terms of the norms of the first derivative $u_x$ and $u$ itself.
\end{proof}

\textbf{Proof of Theorem~\ref{theorem.dim2.pariodic}}. It is a direct consequence of Lemma~\ref{lemma.periodic.2.dim2} and the fact that one can without loss of generality assume that $u$ is harmonic.
\vspace{0.3cm}

 \textbf{\large{Acknowledgement}}\\

The present results have been obtained while the author was a postdoctoral fellow at Temple University. The author is very grateful to Y. Grabovsky for supporting his stay at Temple University and for many helpful discussions. He is also thankful to the anonymous reviewer for very valuable comments. The
material is based upon work supported by the National Science Foundation Grant No. 1008092 (Y.G.).

\end{document}